\documentclass[letter,11pt]{amsart}
\usepackage[latin1]{inputenc}   
\usepackage{amssymb,amsmath,amsthm,mathrsfs,mathdots}
\usepackage{graphicx,mathpazo}
\usepackage[all]{xy}
\usepackage[usenames,dvipsnames]{color}
\usepackage{enumerate}
\usepackage{stmaryrd}
\usepackage{easyReview}
 
\usepackage{adjustbox}

\usepackage{youngtab,ytableau}
\ytableausetup{centertableaux,boxsize=0.25em}

\usepackage[colorlinks=true,linkcolor=Cerulean,pagebackref,citecolor=Mahogany]{hyperref}

\definecolor{farbe}{RGB}{121,60,0}
\definecolor{Farbe}{RGB}{139,0,139}

 \usepackage{tikz}
\usetikzlibrary{arrows,decorations.pathmorphing,decorations.pathreplacing,positioning,shapes.geometric,shapes.misc,decorations.markings,decorations.fractals,calc,patterns,}
\usepackage{tikz-cd}

\tikzset{>=stealth',
     cvertex/.style={circle,draw=black,inner sep=1pt,outer sep=3pt},
     vertex/.style={circle,fill=black,inner sep=1pt,outer sep=3pt},
     star/.style={circle,fill=yellow,inner sep=0.75pt,outer sep=0.75pt},
     tvertex/.style={inner sep=1pt,font=\criptsize},
     gap/.style={inner sep=0.5pt,fill=white}}

\renewcommand{\AA}{\ensuremath{\mathbb{A}}}

\newcommand{\ZZ}{\ensuremath{\mathbb{Z}}}

\newcommand{\KK}{\ensuremath{K}} 

\DeclareMathOperator{\Sing}{Sing}

\DeclareMathOperator{\Spec}{Spec}

\newcommand{\gen}{{\rm gen}}
\newcommand{\prin}{{\rm prin}}

\newcommand{\cA}{\mathcal{A}}

\newcommand{\cQ}{\mathcal{Q}}

\newcommand{\cX}{\mathcal{X}}

\renewcommand{\c}{{\boldsymbol{c}}} 
  
\newcommand{\q}{\boldsymbol{q}} 
 
\renewcommand{\t}{\boldsymbol{t}} 
\newcommand{\x}{\boldsymbol{x}} 
\newcommand{\y}{\boldsymbol{y}}
\newcommand{\z}{\boldsymbol{z}}

\newcommand{\tx}{\widetilde x}
\newcommand{\ty}{\widetilde y}

\newcommand{\Jac}{\operatorname{Jac}}

\newcommand{\XX}{X}

\newcounter{CountAlpha}

\theoremstyle{theorem}

\newtheorem{MainThm}[CountAlpha]{Theorem}  

\newtheorem{Thm}{Theorem}[section]         
\newtheorem{lemma}[Thm]{Lemma}
\newtheorem{cor}[Thm]{Corollary}

\newtheorem{prop}[Thm]{Proposition}

\newtheorem{Qu}[Thm]{Question}

\theoremstyle{definition}
\newtheorem{defi}[Thm]{Definition} 

\newtheorem{example}[Thm]{Example}
\newtheorem{Bem}[Thm]{Remark}

\newcommand{\len}{\ell}
\newcommand{\halflen}{k}

\numberwithin{equation}{section}

\title{Singularities of star cluster algebras}

\author{Eleonore Faber}
\address{
	Institut f\"ur Mathematik und Wissenschaftliches Rechnen,
	Universit\"at Graz,
	Heinrichstr.~36,
	A-8010 Graz, Austria and School of Mathematics, University of Leeds, LS2 9JT Leeds, UK
}
\email{e.m.faber@leeds.ac.uk, eleonore.faber@uni-graz.at}

\author{Bernd Schober}
\address{
	None. (Hamburg, Germany).
}
\email{schober.math@gmail.com}

\date{\today}

\makeatletter
\@namedef{subjclassname@2020}{\textup{2020} Mathematics Subject Classification}
\makeatother

\subjclass[2020]{13F60, 
	14B05, 
	14E15, 
	05E14. 
	} 

\keywords{cluster algebras, singularities, resolution of singularities, continuant polynomials, combinatorial aspects of algebraic geometry}

\begin{document}

\maketitle

\begin{abstract}
	We introduce the class of star cluster algebras and classify their singularities. 
	Then we focus on the combinatorial structure of the desingularization by determining the number of irreducible centers that are blown up.
\end{abstract}


\section{Introduction}

In recent joint work with Benito and Mourtada \cite{BFMS23,BFMS25} we investigated the singularities of cluster algebras of finite cluster type. 
Additionally, \cite[Sec.~6]{BFMS23} classified the singularities of cluster algebras with trivial coefficients arising from a star-shaped quiver as shown in Fig.~\ref{Fig:Ex_star_quiver_BFMS23}.
A combinatorial analysis of the irreducible components of the singular locus showed that their 
number is equal to $ m (m-1) 2^{m-3} $, where $ m $ is the valency of the vertex in the middle, see \cite[Thm.~6.3 and Remark~6.5]{BFMS23}. 

\begin{figure}[h!] 
	\scalebox{0.8}{
	\begin{tikzpicture}[->,>=stealth',shorten >=1pt,auto,node distance=1.5cm, thick,main node/.style={circle,draw,white}]

		\node[main node] (0) { };
		\node[main node] (1) [above right of=0] { };
		\node[main node] (2) [right of=0] { };
		\node[main node] (3) [below right of=0] { };
		\node[main node] (4) [below of=0] { };
		\node[main node] (m-1) [above left of=0] { };
		\node[main node] (m) [above  of=0] { };


		\draw (0) circle (6pt);
		\draw (1) circle (6pt);
		\draw (2) circle (6pt);
		\draw (3) circle (6pt);
		\draw (4) circle (6pt);
		\draw (m-1) circle (6pt);
		\draw (m) circle (6pt);

		\path

		(3) edge (0)
		(1) edge (0)
		(2) edge (0)
		(4) edge (0)
		(m-1) edge (0)
		(0) edge (m);

		\draw[dashed,-] (260:1.5cm) arc (260:143:1.5cm);

	\end{tikzpicture}
}
	\caption{First example of a star-shaped quiver as appeared in \cite{BFMS23}.}
	\label{Fig:Ex_star_quiver_BFMS23}
\end{figure}

There also exists prior work on the singularity type of cluster algebras. 
In \cite{BMRS2015} it was shown that 
locally acyclic cluster algebras have at worst canonical singularities in characteristic zero and that they are
strongly $ F $-regular in positive characteristic.
Furthermore, \cite{MRZ2018} proved Cohen--Macaulayness and normality for the lower bound cluster algebra, which coincides with the corresponding cluster algebra for acyclic quivers. 
But no combinatorial analysis of the singularities has been made apart from \cite{BFMS23}.

In the present article we initiate a more thorough study of star cluster algebras, their singularity theory and the corresponding combinatorics.
We introduce a generalization of cluster algebras arising from star-shaped quivers where the rays of the star are of possibly different lengths (Def.~\ref{Def:star}),
as illustrated in Fig.~\ref{Fig:Ex_star_quiver}.

\begin{figure}[h!] 
	\scalebox{0.8}{
	\begin{tikzpicture}[->,>=stealth',shorten >=1pt,auto,node distance=2cm, thick,main node/.style={circle,draw,white}]

		\node[main node] (0) {1 1};
		
		\node[main node] (11) at (60:2cm) {1 1};
		\node[main node] (21) [right of=11] {1 1};
		\node[main node] (x1) [right of=21] {1 1};
		\node[main node] (l1) [right of=x1] {1 1};

		\node[main node] (12) at (0:2cm) {1 1};
		\node[main node] (22) [right of=12] {1 1};
		\node[main node] (x2) [right of=22] {1 1};
		\node[main node] (l2) [right of=x2] {1 1};

		\node[main node] (13) at (-60:2cm) {1 1};
		\node[main node] (23) [right of=13] {1 1};
		\node[main node] (x3) [right of=23] {1 1};
		\node[main node] (l3) [right of=x3] {1 1};
		
		\node[main node] (1m) at (120:2cm) {1 1};
		\node[main node] (2m) [left of=1m] {1 1};
		\node[main node] (xm) [left of=2m] {1 1};
		\node[main node] (lm) [left of=xm] {1 1};

		\node at (0) {\small $0$};
		
		\node at (11) {\small $ 1^{(1)} $};
		\node at (21) {\small $ 2^{(1)} $};
		\node at (x1) {\small $ \cdots $};
		\node at (l1) {\small $ \len_1^{(1)} $};	
		
		\node at (12) {\small $ 1^{(2)} $};
		\node at (22) {\small $ 2^{(2)} $};
		\node at (x2) {\small $ \cdots $};
		\node at (l2) {\small $ \len_2^{(2)} $};
		
		\node at (13) {\small $ 1^{(3)} $};
		\node at (23) {\small $ 2^{(3)} $};
		\node at (x3) {\small $ \cdots $};
		\node at (l3) {\small $ \len_3^{(3)} $};
		
		\node at (1m) {\small $ 1^{(m)} $};
		\node at (2m) {\small $ 2^{(m)} $};
		\node at (xm) {\small $ \cdots $};
		\node at (lm) {\small $ \len_m^{(m)} $};

		\draw (0) circle (13.5pt);
		
		\draw (11) circle (13.5pt);
		\draw (21) circle (13.5pt);
		\draw (l1) circle (13.5pt);
		
		\draw (12) circle (13.5pt);
		\draw (22) circle (13.5pt);
		\draw (l2) circle (13.5pt);
		
		\draw (13) circle (13.5pt);
		\draw (23) circle (13.5pt);
		\draw (l3) circle (13.5pt);
		
		\draw (1m) circle (13.5pt);
		\draw (2m) circle (13.5pt);
		\draw (lm) circle (13.5pt);

		\path
	
		(11) edge (0)
		(21) edge (11)
		(x1) edge (21)
		(l1) edge (x1)
		(12) edge (0)
		(22) edge (12)
		(x2) edge (22)
		(l2) edge (x2)
		(13) edge (0)
		(23) edge (13)
		(x3) edge (23)
		(l3) edge (x3)
		(1m) edge (0)
		(2m) edge (1m)
		(xm) edge (2m)
		(lm) edge (xm);

		 \draw[very thick, dashed,-] (275:2cm) arc (275:145:2cm);
		
	\end{tikzpicture}
}
	\caption{Example of a star-shaped quiver with rays of different lengths.}
	\label{Fig:Ex_star_quiver}
\end{figure}


Star-shaped quivers have shown up in a representation theoretic context before.
For example, they appear as quivers of Ringel's canonical algebras \cite{Ringel84}
as well as in connection with the Deligne-Simpson problem \cite{C-B_S_2006, C-B_H_2025}.
Moreover, in \cite{IW_weighted}, Iyama and Wemyss studied some rational surfaces with star-shaped dual resolution graphs via a Geigle--Lenzing weighted projective line \cite{GeigleLenzing87}.

Observe that star-shaped quivers cover all quivers of type $ A, D, E $ (see~Fig.~\ref{Fig:ADE})
and more generally trees of type $ T_{p,q,r} $.
Therefore, Thm.~\ref{Thm:A} below provides a unified presentation of the classification of the singularities of cluster algebras of finite cluster type $ A, D, E $ 
(cf.~\cite[Thm.~A,~D,~E]{BFMS25}, and also \cite[Thm.~A]{BFMS23}).
\\
Since star shaped quivers with less than three rays are quivers of type $ A_n $ for some $ n $,
the singularities of the corresponding cluster algebra 
have been classified in \cite[Thm.~A]{BFMS25}, see also \cite[Thm.~A]{BFMS23}.
Therefore, we may assume that the valency of the vertex in the middle is at least three.

\begin{figure}[h!] 
	\scalebox{0.75}{
		\begin{tikzpicture}[->,>=stealth',shorten >=1pt,auto,node distance=1.5cm, thick,main node/.style={circle,draw,white}]


			\node[main node] (0) { };
			\node[main node] (1) [left of=0] { };
			\node[main node] (2) [left of=1] { };
			\node[main node] (x) [left of=2] {1};
			\node[main node] (m) [left of=x] { };
			\node[main node] (An) [left of=m] { };
			
			\node at (x) {\small $ \cdots $};
			\node at (An) {\large $ (A_n)_{n\geq 1} $ \hspace{1cm} \ };

			\draw[fill=black] (0) circle (6pt);
			\draw (1) circle (6pt);
			\draw (2) circle (6pt);
			\draw (m) circle (6pt);

			\path
			
			(1) edge (0)
			(2) edge (1)
			(x) edge (2)
			(m) edge (x);


			\node[main node] (0d) [below = 2cm of 0] { };
			\node[main node] (1d) [left of=0d] { };
			\node[main node] (2d) [left of=1d] { };
			\node[main node] (xd) [left of=2d] {1};
			\node[main node] (md) [left of=xd] { };
			\node[main node] (Dn) [left of=md] { };
			\node[main node] (ad) [above right of=0d] { };
			\node[main node] (bd) [below right of=0d] { };

			\node at (xd) {\small $ \cdots $};
			\node at (Dn) {\large $ (D_n)_{n\geq 4} $ \hspace{1cm} \ };

			\draw[fill=black] (0d) circle (6pt);
			\draw (1d) circle (6pt);
			\draw (2d) circle (6pt);
			\draw (md) circle (6pt);
			\draw (ad) circle (6pt);
			\draw (bd) circle (6pt);

			\path
			(ad) edge (0d)
			(bd) edge (0d)
			(1d) edge (0d)
			(2d) edge (1d)
			(xd) edge (2d)
			(md) edge (xd);


			\node[main node] (0e) [below = 3cm of 0d] { };
			\node[main node] (1e) [left of=0e] { };
			\node[main node] (2e) [left of=1e] { };
			\node[main node] (xe) [left of=2e] {1};
			\node[main node] (me) [left of=xe] { };
			\node[main node] (En) [left of=me] { };
			\node[main node] (Oe) [above of=0e] { };
			\node[main node] (Ae) [right of=0e] { };
			\node[main node] (Be) [right of=Ae] { };

			\node at (xe) {\small $ \cdots $};
			\node at (En) {\large $ (E_n)_{n \in \{6,7,8\}} $ \hspace{1.5cm} \ };

			\draw[fill=black] (0e) circle (6pt);
			\draw (1e) circle (6pt);
			\draw (2e) circle (6pt);
			\draw (me) circle (6pt);
			\draw (Oe) circle (6pt);
			\draw (Ae) circle (6pt);
			\draw (Be) circle (6pt);

			\path
			(Oe) edge (0e)
			(Ae) edge (0e)
			(Be) edge (Ae)
			(1e) edge (0e)
			(2e) edge (1e)
			(xe) edge (2e)
			(me) edge (xe);

		\end{tikzpicture}
	}
	\caption{Quivers corresponding to cluster algebras of finite cluster type with the central vertex marked black}
	\label{Fig:ADE}
\end{figure}

As in \cite{BFMS25}, we follow the viewpoint of 
\cite{BFMN_degen} 
and encode coefficients of a cluster algebra
by considering the corresponding varieties as families and then studying the respective fibers,
see the beginning of Sec.~\ref{Sec:Classi}.
In particular, we consider cluster algebras with generic coefficients (Def.~\ref{Def:gen_coeff}) and use the reduction of \cite{BFMS25} to principal coefficients (Lemma~\ref{Lem:prin}) which simplifies the investigation of the singularities.
Therefore, 
we are lead to a family 
\[ 
	\phi \colon \Spec(\cA_\star^\prin) \to  S := \Spec(\KK [\c^\pm] ) 
\]
where $ \c $ are the coefficients and $ \KK $ is the base field.
Note that we will deduce a presentation of $ \Spec(\cA_\star^\prin) $ embedding it into $ \AA_\kappa^{\len_1+\cdots + \len_m+m+2} $
(see Prop.~\ref{Prop:NewPresentation}). 
The main object of our investigation are the fibers
\[  
	X := \phi^{-1} (\eta) 
	\subseteq \AA_\kappa^{\len_1+\cdots + \len_m+m+2} 
\ ,
\] 
where $ \eta \in S $ is a closed point and $ \kappa = \kappa (\eta ) $ denotes the residue field of the local ring $ \mathcal{O}_{S,\eta} $.
In Prop.~\ref{Prop:NewPresentation}, we deduce explicit defining equations for $ X $ which make the problem to determine and classify the singularities feasible.

Recall that a hypersurface singularity of dimension $ n $ is called {\em of type $ A_1 $} if it has an isolated singularity at which the variety is locally given by 
$  x_0^2 + x_1 x_2 + \cdots + x_{n-1} x_n  = 0 $ (if $ n $ is even)
resp.~by $ x_0 x_1 + \cdots + x_{n-1} x_n  = 0 $ (if $ n $ is odd)
after possibly passing to the completion at the singular point. 
In particular, the resolution of such a singularity is obtained by simply blowing up the singular locus. 
\\
We introduce the notion of an intersection of compatible $ A_1 $-hypersurface singularities
and provide an explicit description of its desingularization, 
see Lemma~\ref{Lem:intersec_of_An} and the paragraph after its statement.
\\
If a singularity $ Z \subset \AA_\kappa^N $ is locally isomorphic to $ Y \times \AA_\kappa^{N-M} $ for a singularity $ Y \subset \AA_\kappa^M $,
then we say that $ Z $ is a {\em cylinder over $ Y $}. 

Our main result on the classification of the singularities of $ X $ formulates as follows:

\begin{MainThm}[Thm.~\ref{Thm:Sing} and~\ref{Thm:ClassSing}]
	\label{Thm:A}
	Let $ X := \phi^{-1} (\eta) 
	\subseteq \AA_\kappa^{\len_1+\cdots + \len_m+m+2} $ 
	be as defined above. 
	The singularities of $ X $ have the following classification:
	\begin{enumerate}[(1)]
		\item 
		Either $ X $ has an isolated singularity locally at which $ X $ is isomorphic to a hypersurface singularity of type $ A_1 $, 
		
		\item 
		or $ Sing(X) \cong  \bigcup_{i=1}^s  C_i  $ has $ s $ disjoint components (where $ C_i $ is not necessarily irreducible)
		and for each $ i \in \{ 1, \ldots, s \} $,
		$ X $ is locally at $ C_{i} $ isomorphic to a cylinder over an intersection of compatible $ A_1 $-hypersurface singularities, 
		
		\item 
		or $ X $ is regular.
	\end{enumerate}
\end{MainThm}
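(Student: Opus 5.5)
Our plan is to work with the explicit presentation of $X$ from Prop.~\ref{Prop:NewPresentation}, after reducing the generic-coefficient case to principal coefficients via Lemma~\ref{Lem:prin}. For a star quiver the exchange relations along each ray $j$ have the form $x_{i-1}^{(j)} x_{i+1}^{(j)} = c\, x_i^{(j)} + c'$, i.e.\ an $A_{\len_j}$-type chain. Hence all variables on ray $j$ beyond the first can be eliminated as continuant polynomials in two of them. We expect $X$ to be presented as follows. For each ray $j$ there is a pair of variables $(u_j, v_j)$, namely the end variable of the ray and the variable adjacent to the centre. There are also the two central variables $x_0, x_0'$, subject to one central relation $x_0 x_0' = \gamma_1 \prod_j v_j + \gamma_2 \prod_j w_j$. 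In addition there is one relation per ray of the shape $u_j v_j' - P_j(\ldots) = \text{const}$, where $P_j$ is a continuant polynomial in the ray variables. The first step is to make this normal form explicit, so that $X$ is a complete intersection of codimension $m+1$ in $\AA_\kappa^{\len_1+\cdots+\len_m+m+2}$.

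Next we compute $\Sing(X)$ with the Jacobian criterion. Each ray equation contains a variable, at the far end of the ray, that occurs linearly with a coefficient that is a unit or a product of other coordinates. So the Jacobian matrix has a block-triangular structure: the $m$ ray-rows are independent at a point unless certain ray variables vanish simultaneously. In that case the continuant recursion forces the constant terms to satisfy a specific relation involving the coefficients $\eta$. This yields a case distinction on $\eta$, treated ray by ray. Let $J \subseteq \{1,\ldots,m\}$ be the set of rays where the degeneracy condition on the coefficients holds; we check which configurations of $J$ can occur simultaneously with the central row becoming dependent. If no degeneracy is possible, $X$ is regular, which is case (3). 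If the only possible degeneracy forces all coordinates to fixed values, we obtain an isolated point; this is case (1). If the degeneracy condition on the coefficients holds in positive dimension, we obtain case (2). The components $C_i$ are indexed by combinations of choices of which factor in the central relation vanishes, analogous to the count $m(m-1)2^{m-3}$ from \cite{BFMS23}.

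For the local classification, at a singular point we use the equations in which some variable appears with unit coefficient to eliminate variables, via the implicit function theorem in the completion. Two things remain. In the isolated case, a single equation $x_0x_0' = \sum$ of products of pairs survives. After a linear change of coordinates this is a nondegenerate quadratic form of full rank, hence of type $A_1$; one only has to check that the Hessian has full rank, which comes from the pairwise products $u_j v_j$. In case (2), we split off a smooth factor spanned by the coordinates along $C_i$ that appear only linearly, which gives the cylinder structure. The remaining equations are each of the form (product of two new coordinates) $\pm$ (product of shared coordinates), and we verify that they match the definition of an intersection of compatible $A_1$-hypersurface singularities in Lemma~\ref{Lem:intersec_of_An}. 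We also have to prove that the distinct $C_i$ are disjoint. Our approach is to show that two different degeneracy patterns impose incompatible values on some coordinate, for example requiring $v_j=0$ and $v_j\neq 0$.

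The main obstacle will be the Jacobian analysis. In particular, we must track exactly when the continuant values on a ray can vanish simultaneously, and check that the resulting local equations really have the compatible shape required by Lemma~\ref{Lem:intersec_of_An}, rather than a more degenerate intersection. Our first attempt will be to handle one ray at a time using the continuant recursion $K_{n+1} = y K_n - K_{n-1}$ and the identity $K_nK_{n-2} - K_{n-1}^2 = \pm 1$. The second identity shows that two consecutive continuants cannot both vanish, which should pin down the singular points on each ray.
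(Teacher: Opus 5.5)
\textbf{Gap 1: the normal form.} The normal form you expect is not the presentation of Prop.~\ref{Prop:NewPresentation}, and the difference removes the mechanism the classification rests on. There the ray equations are $F_\alpha = P_{\ell_\alpha+1}(\z^{(\alpha)}) - \lambda_\alpha u$. So every ray is tied to the single central variable $u$, not to a constant. The central relation is $G = uw - \prod_\alpha P_{\ell_\alpha}(z_1^{(\alpha)},\ldots,z_{\ell_\alpha}^{(\alpha)}) - \mu$, whose second term is a constant.

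With decoupled ray relations you cannot reach the shape of Lemma~\ref{Lem:intersec_of_An}. Your local equations, ``product of two new coordinates $\pm$ product of shared coordinates'', are not of the form $g_a - g_b$ with $g_a = \sum_{i=1}^{q_a} t^{(a)}_{2i-1}t^{(a)}_{2i}$. The missing step is as follows.
\begin{enumerate}[(i)]
\item At a type (2) singular point, two rays $\alpha,\beta$ have odd length and $\z^{(\alpha)}=\z^{(\beta)}=0$. Hence $P_{\ell_\alpha}=P_{\ell_\beta}=0$ and $uw=\mu$, so $u$ and $w$ are units.
\item Then $G$ eliminates $w$ and $F_\alpha$ eliminates $u$. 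What remains are the elements $\lambda_\alpha P_{\ell_\beta+1}(\z^{(\beta)}) - \lambda_\beta P_{\ell_\alpha+1}(\z^{(\alpha)})$. These are differences of functions in \emph{disjoint} blocks of variables.
\item For $\beta$ outside the compatibility class these hypersurfaces are smooth and give the cylinder factor.
\item For $\beta$ inside the class, Lemma~\ref{Lem:ResultsContinuants}\eqref{Lem:ResultsContinuants_singu} normalizes each block once and for all to $(-1)^{\halflen_\beta} + \sum_j t^{(\beta)}_{2j-1}t^{(\beta)}_{2j}$. The constant terms cancel exactly because $\lambda_\alpha(-1)^{\halflen_\beta} = \lambda_\beta(-1)^{\halflen_\alpha}$.
\end{enumerate}
This block separation is what rules out the non-simultaneously-normalizable situation of Example~\ref{Ex:not-compatible-A1}. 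You correctly flag that situation as the main obstacle, but your plan does not resolve it.

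\textbf{Gap 2: the components $C_i$.} You index the $C_i$ by which factors of the central relation vanish. That means pairs $\{\alpha,\beta\}$ with $P_{\ell_\alpha}=P_{\ell_\beta}=0$, or even the irreducible components counted by $m(m-1)2^{m-3}$. You then plan to show these pieces are pairwise disjoint, but they are not. For three mutually compatible rays, $C_{\alpha,\beta}\cap C_{\alpha,\gamma}$ contains the locus where $\z^{(\alpha)}=\z^{(\beta)}=\z^{(\gamma)}=0$, $u=\xi_u$ and $w=\xi_w$. Concretely, take $m=3$, all $\ell_\alpha=1$ and trivial coefficients. Then $s=1$, and $\Sing(X)$ has six irreducible components, all passing through the point $\z=0$, $u=w=-1$.

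The correct $C_i$ is the union over an entire compatibility class $I_{\alpha_i}$ of Def.~\ref{Def:partition}. Distinct $C_i$ are disjoint because on $C_{\alpha,\beta}$ the coordinate $u$ equals $(-1)^{\halflen_\alpha}\lambda_\alpha^{-1}$, which is constant on a class and differs between classes.

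\textbf{Related points.}
\begin{enumerate}[(i)]
\item The degeneracy is not a per-ray condition on $\eta$ but a pairwise one: two odd-length rays with the same critical value of $u$.
\item Case (1) arises by a different route. It requires $u=w=0$, all $\ell_\alpha$ even, and $V(\prod_\alpha P_{\ell_\alpha}+\mu)$ singular, which is handled by Prop.~\ref{Prop:ProdContSing}.
\item The identity $K_nK_{n-2}-K_{n-1}^2=\pm1$ is false for general continuants. For example, $P_2P_0-P_1^2 = z_1z_2-1-z_1^2$. Non-vanishing of two consecutive continuants instead follows directly from the recursion.
\end{enumerate}
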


In Thm.~\ref{Thm:Sing} and~\ref{Thm:ClassSing} we describe explicitly the conditions under which the cases of the distinction outlined in Thm.~\ref{Thm:A} appear.
The precise conditions are based on technical notation introduced in Prop.~\ref{Prop:NewPresentation} and Def.~\ref{Def:partition}.


As in \cite{BFMS23}, we are interested in combinatorial aspects of the singularities. 
The explicit description of the desingularization of an intersection of compatible hypersurfaces singularities of type $ A_1 $ provided in Lemma~\ref{Lem:intersec_of_An}
allows to count the number of irreducible components of a center in each step of the resolution, see Prop.~\ref{Prop:cA}.
This leads to the following integer sequence for the total number of centers that are blown up.

\begin{MainThm}[Prop.~\ref{Prop:cA}]
	\label{Thm:B}
	Let $ \cX^{(n)} $ be an intersection of $ n $ compatible $ A_1 $-hypersurface singularities
	where the $ A_1 $-hypersurface singularities are contained in $ \AA_\kappa^{2 q_a} $ for $  a \in \{ 1, \ldots, n \} $
	for $ \kappa $ a field. 
	Set $ \q := (q_1, \ldots, q_n) $
	and $ \alpha := \# \{ a \in \{ 1, \ldots, n \} \mid q_a = 1 \} $. 
	Let $ \mathcal{A} (n;\q) $ be the number of irreducible components blown up along the desingularization of 
	$  \cX^{(n)} $. 
	Then, we have: 
	\[
	\mathcal{A} (n;\q) 
	\ = \ 
	3^\alpha 2^{n-\alpha}  
	- ( n + 1) 2^{\alpha} 
	+ \alpha 2^{\alpha - 1}
	\ .
	\]
\end{MainThm}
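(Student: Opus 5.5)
The plan is to reduce the count to the explicit description of the desingularization of $\cX^{(n)}$ given in Lemma~\ref{Lem:intersec_of_An} and the paragraph after it, and then do a generating-function style sum. I will use the local picture of compatible $A_1$-singularities: the $a$-th one is $\{f_a=0\}\subset\AA_\kappa^{2q_a}$ with $f_a=\sum_{j=1}^{q_a}x_{a,j}y_{a,j}$, and its singular point is the origin $0_a$. The first step is to read off from Lemma~\ref{Lem:intersec_of_An} the centers blown up. I expect them to be indexed by subsets $S\subseteq\{1,\dots,n\}$ with $|S|\ge 2$, blown up in order of decreasing $|S|$. For a given $S$, the center should be the strict transform of
\[
Z_S \ = \ \prod_{a\in S}\{0_a\}\times\prod_{b\notin S}\{f_b=0\}\ .
\]
The subsets with $|S|\le 1$ contribute no new center: for $S=\emptyset$ nothing is singular, and for $|S|=1$ the corresponding locus is already regular after the earlier blow-ups, or is handled inside the previous step. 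This has to be confirmed against the lemma, and it is exactly what produces the subtracted terms in the formula.

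Second, I will count irreducible components. Blowing up the deeper strata first is an isomorphism away from them, so the strict transform of $Z_S$ has the same irreducible components as $Z_S$ itself. These are products of the components of the factors. For $q_b\ge 2$, the quadric $\sum_j x_{b,j}y_{b,j}$ is irreducible. For $q_b=1$, the hypersurface $x_by_b=0$ has two components. Set $c_b=2$ if $q_b=1$ and $c_b=1$ otherwise. Then $Z_S$ has $\prod_{b\notin S}c_b$ irreducible components, which gives
\[
\mathcal{A}(n;\q)\ =\ \sum_{|S|\ge 2}\ \prod_{b\notin S}c_b\ .
\]

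Third, the sum over all $S$ equals $\prod_{b=1}^n(1+c_b)=3^\alpha2^{n-\alpha}$. From this I subtract the $S=\emptyset$ term, which is $2^\alpha$, and the $|S|=1$ terms. Removing an index $a$ with $q_a\ge 2$ leaves the product $2^\alpha$, and there are $n-\alpha$ such indices. Removing an index with $q_a=1$ leaves $2^{\alpha-1}$, and there are $\alpha$ such indices. The total subtracted is
\[
(n-\alpha+1)2^\alpha+\alpha2^{\alpha-1}\ =\ (n+1)2^\alpha-\alpha2^{\alpha-1}\ ,
\]
which yields the claimed formula.

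The main obstacle is the first step: making sure that the centers in Lemma~\ref{Lem:intersec_of_An} are exactly the strict transforms of the $Z_S$ with $|S|\ge2$. Two things need care. One is the role of the strata with $|S|=1$. The other is whether the reducible case $q_a=1$ causes the components $\{x_a=0\}$ and $\{y_a=0\}$ to be separated, rather than merged, in later centers. I would check this first for $n=2$ with $\q=(1,1)$, where the formula predicts $9-12+4=1$ center, and for $\q=(1,2)$, where it predicts $6-6+1=1$. Only then would I run the induction on $n$ following the construction in the lemma.
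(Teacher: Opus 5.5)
Your proposal is correct and follows essentially the paper's route. The centers are exactly the $\mathcal{Y}_j$ of Lemma~\ref{Lem:intersec_of_An}, i.e.\ the strict transforms of your $Z_S$ with $|S|=n-j\ge 2$, so your ``main obstacle'' is settled directly by the statement of that lemma. Each $Z_S$ contributes $2^{\#\{b\notin S \mid q_b=1\}}$ irreducible components. The only difference is bookkeeping: the paper sums $\binom{n-\alpha}{n-j-k}\binom{\alpha}{k}2^{\alpha-k}$ over $k$ and $j$ and then removes the missing low-order terms, whereas your identity $\sum_{S}\prod_{b\notin S}c_b=\prod_b(1+c_b)=3^\alpha 2^{n-\alpha}$, minus the $|S|\le 1$ terms, reaches the same formula more quickly.
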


The presented statement of Thm.~\ref{Thm:B} is a simplification of Prop.~\ref{Prop:cA} which provides more refined information also addressing the number of irreducible components blown up in each step of the desingularization procedure of Lemma~\ref{Lem:intersec_of_An}.

Let us point out that in boundary cases the formula for $ \cA(n;\q) $ leads to Eulerian numbers ($ \alpha = 0 $) resp.~to the binomial transform of the latter ($ \alpha = n $),
see Cor.~\ref{cor:bondary}.



{\em Summary.}
In Sec.~\ref{Sec:Star} we introduce the notion of star cluster algebras and recall the required background on cluster algebras. 
In particular, we deduce a presentation of the commutative algebra of a star cluster algebra using continuant polynomials 
which will be useful for our task to study the singularities. 
After that, we classify the corresponding singularities in Sec.~\ref{Sec:Classi}, proving Thm.~\ref{Thm:A}.
This includes the definition and investigation of an intersection of compatible $ A_1 $-hypersurfaces. 
In Sec.~\ref{Sec:Combi}, we turn our attention to combinatorial aspects of the singularities.
In particular, we show Thm.~\ref{Thm:B} and draw the connection to Eulerian numbers.
After that, we end by discussing possible research direction to proceed further with in Sec.~\ref{Sec:beyond}.

Throughout the article
$ \KK $ will be a field of any characteristic.
Furthermore, we follow the convention that coefficient variables are always assumed to be invertible.

{\em Acknowledgements:}
We thank the Mathematisches Forschungs\-institut Oberwolfach for the excellent research environment and ideal working conditions during our stays as Oberwolfach Research Fellows.  
Further, we thank Ang\'elica Benito and Hussein Mourtada for initial discussions motivating us to continue in this more general direction. 
This work was supported by EPSRC grant EP/W007509/1 and by NAWI Graz

\section{Star Cluster Algebras}
\label{Sec:Star}

First, we introduce the notion of a star cluster algebra and deduce a presentation of the corresponding commutative algebra which is suitable to investigate its singularity theory. 

Recall that a {\em quiver} $ \cQ = (\cQ_0, \cQ_1) $ is a finite directed graph consisting of the finite set of vertices $ \cQ_0 = \{ 1, \ldots, N \} $ and a finite multi-set $ \cQ_1 $ of arrows between the vertices.
The elements of $ \cQ_1 $ are written as pairs $ (i,j) $ corresponding to arrows $ i \to j $.
Since multiple arrows between two vertices are possible, $ \cQ_1 $ has to be a multi-set. 
In the context of cluster algebras, the following two hypotheses are imposed on $ \cQ $: 
there are no loops in $ \cQ_1 $ (i.e., for every $ i \in \cQ_0 $, we have $ (i,i) \notin \cQ_1 $)
and there is no oriented $ 2 $-cycle in $ \cQ $ (i.e., $ (i,j) \in \cQ_1 $ implies $ (j,i) \notin \cQ_1 $).

A quiver is called {\em acyclic} if it does not contain an oriented cycle.
The quivers illustrated in Fig.~\ref{Fig:Ex_star_quiver_BFMS23}, Fig.~\ref{Fig:Ex_star_quiver}, and Fig.~\ref{Fig:ADE} are examples of acyclic quivers. 

An essential construction for the definition of cluster algebras is the notion of mutation. 
In terms of quivers,
the mutation of $ \cQ $ at a vertex $ k \in \cQ_0 $ is a new quiver $ \cQ' = \mu_k (\cQ) $
which is constructed from $ \cQ $ by
first adding for each directed path $ i \to k \to j $ in $ \cQ $ a new arrow $ i \to j $,
then reversing all arrows incident to $ k $,
and finally removing all oriented $ 2 $-cycles from the resulting quiver.
Two quivers are called {\em mutation-equivalent} if there exists a finite sequence of mutations transforming one quiver into the other. 

Using mutation, the set of vertices can be refined by distinguishing it into 
{\em mutable} vertices in whose direction it is allowed to perform a mutation 
and  {\em frozen} vertices for which it is not allowed to perform a mutation. 
Mutable vertices are usually marked with circles and frozen vertices by squares when drawing quivers.
A quiver is called {\em totally mutable} if all its vertices are mutable.
In particular, all preceding figures show only totally mutable quivers.
In Fig.~\ref{Fig:Ex_frozen}, we provide an example of a quiver with frozen vertices. 

\begin{figure}[h!] 
	\scalebox{0.8}{
	\begin{tikzpicture}[->,>=stealth',shorten >=1pt,auto,node distance=1.5cm, thick,main node/.style={circle,draw,white}]

		\node[main node] (0) { };
		\node[main node] (0a) [below left of=0] { };
		\node[main node] (0b) [right of=0] { };
		\node[main node] (1) [left of=0] { };
		\node[main node] (1a) [above left of=1] { };
		\node[main node] (1b) [below left of=1] { };
		\node[main node] (2) [left of=1] { };
		\node[main node] (2a) [above left of=2] { };
		\node[main node] (2b) [below left of=2] { };
		\node[main node] (3) [left of=2] { };
		\node[main node] (3a) [above left of=3] { };
		\node[main node] (3b) [below left of=3] { };
		\node[main node] (4) [left of=3] { };
		\node[main node] (4a) [above left of=4] { };
		\node[main node] (4b) [below left of=4] { };
		\node[main node] (A) [above right of=0] { };
		\node[main node] (Aa) [right of=A] { };
		\node[main node] (Ab) [above left of=A] { };
		\node[main node] (B) [above right of=A] { };
		\node[main node] (Ba) [right of=B] { };
		\node[main node] (Bb) [above left of=B] { };
		\node[main node] (a) [below right of=0] { };
		\node[main node] (aa) [below left of=a] { };
		\node[main node] (ab) [right of=a] { };

		\draw[fill=black] (0) circle (6pt);
		\draw ([xshift=-5pt,yshift=-5pt]0a) rectangle ++(10pt,10pt);
		\draw ([xshift=-5pt,yshift=-5pt]0b) rectangle ++(10pt,10pt);
		\draw (1) circle (6pt);
		\draw ([xshift=-5pt,yshift=-5pt]1a) rectangle ++(10pt,10pt);
		\draw ([xshift=-5pt,yshift=-5pt]1b) rectangle ++(10pt,10pt);
		\draw (2) circle (6pt);		
		\draw ([xshift=-5pt,yshift=-5pt]2a) rectangle ++(10pt,10pt);
		\draw ([xshift=-5pt,yshift=-5pt]2b) rectangle ++(10pt,10pt);
		\draw (3) circle (6pt);		
		\draw ([xshift=-5pt,yshift=-5pt]3a) rectangle ++(10pt,10pt);
		\draw ([xshift=-5pt,yshift=-5pt]3b) rectangle ++(10pt,10pt);
		\draw (4) circle (6pt);		
		\draw ([xshift=-5pt,yshift=-5pt]4a) rectangle ++(10pt,10pt);
		\draw ([xshift=-5pt,yshift=-5pt]4b) rectangle ++(10pt,10pt);
		\draw (A) circle (6pt);
		\draw ([xshift=-5pt,yshift=-5pt]Aa) rectangle ++(10pt,10pt);
		\draw ([xshift=-5pt,yshift=-5pt]Ab) rectangle ++(10pt,10pt);
		\draw (B) circle (6pt);
		\draw ([xshift=-5pt,yshift=-5pt]Ba) rectangle ++(10pt,10pt);
		\draw ([xshift=-5pt,yshift=-5pt]Bb) rectangle ++(10pt,10pt);
		\draw (a) circle (6pt);
		\draw ([xshift=-5pt,yshift=-5pt]aa) rectangle ++(10pt,10pt);
		\draw ([xshift=-5pt,yshift=-5pt]ab) rectangle ++(10pt,10pt);

		\path
		(0a) edge (0)
		(0) edge (0b)
		(A) edge (0)
		(Aa) edge (A)
		(A) edge (Ab)
		(B) edge (A)
		(Ba) edge (B)
		(B) edge (Bb)
		(a) edge (0)
		(aa) edge (a)
		(a) edge (ab)
		(1) edge (0)
		(1a) edge (1)
		(1) edge (1b)
		(2) edge (1)
		(2a) edge (2)
		(2) edge (2b)
		(3) edge (2)
		(3a) edge (3)
		(3) edge (3b)
		(4) edge (3)
		(4a) edge (4)
		(4) edge (4b);

	\end{tikzpicture}
}
	\caption{Example of a quiver with vertices that are frozen (marked as squares) and mutable (marked as circles), and a central mutable vertex marked in black.}
	\label{Fig:Ex_frozen}
\end{figure}

A totally mutable quiver $ \cQ $ is {\em of type $ A_N $} if it is mutation equivalent to some $ A_N $-quiver,
i.e., one of the form in Fig.~\ref{Fig:ADE}.

\begin{defi}
	\label{Def:star}
	Let $ \cQ = (\cQ_0,\cQ_1) $ be a totally mutable quiver. 
	We say that $ \cQ $ is of {\em type star-shaped} if it is mutation equivalent to a star-shaped quiver $ \cQ_\star $,
	i.e., a quiver that   
	is made up from the following components:
	\begin{enumerate}[(1)]
		\item 
		finitely many quivers $ \cQ^{(\alpha)}_\star $ of type $ A_{\len_\alpha} $ with 
		respective set of vertices $ \{ 1^{(\alpha)}, \ldots, \len_\alpha^{(\alpha)} \} $ 
		and $ \len_\alpha \in \ZZ_+ $, 
		for $ \alpha \in \{ 1, \ldots, m \} $,
		and
		
		\item 
		an additional, distinguished vertex, say $ 0 $, 
		and there exists a single arrow between $ 0 $ and $ 1^{(\alpha)}  $, 
		for all $ \alpha \in \{ 1, \ldots, m \} $. 
	\end{enumerate} 
	We call 
	$ \cQ^{(\alpha)}_\star  $ the {\em rays} of $ \cQ_\star $,
	$ \len_\alpha $ the {\em length of the ray} $ \cQ^{(\alpha)}_\star $,
	and
	$ m $ the {\em number of rays} of $ \cQ_\star $.  
\end{defi}

Note that $ \cQ_\star $ has $ \len_1 + \cdots +  \len_m + 1 $ vertices. 
Clearly, a star-shaped quiver is acyclic
since its underlying graph 
(i.e., the graph that we obtain after forgetting the orientation of the arrows) 
is a tree. 
The latter also implies that two star-shaped quivers with possibly different orientation on the arrows are mutation-equivalent,
see \cite[Lemma~5.1.4]{Marsh13}. 
Therefore, we may assume without loss of generality 
that the orientation of the arrows of a star shaped quiver are as given in Fig.~\ref{Fig:Ex_star_quiver}.

Let us next explain how a cluster algebra $ \cA(\cQ) $ is associated to an acyclic quiver $ \cQ $,
which is sufficient for the considerations in the present article. 
Instead of recalling the definition in details, we introduce the cluster algebra $ \cA(\cQ) $ 
via the following theorem, which is a consequence of \cite[Thm.~1.20 and Cor.~1.21]{BFZ2005} 
and which gives a suitable presentation of $ \cA(\cQ) $.
For the general definition of a cluster algebra we refer to \cite[Section~2]{FZ2002}. 

\begin{Thm}[{\cite{BFZ2005}}]
	\label{Thm:FirstPresentation}
	Let $ \cQ $ be an acyclic quiver with $ N $ vertices such that the vertices $ 1, \ldots, n $ are mutable 
	and $ n+1, \ldots, N $ are frozen.  
	Let $ K $ be a field
	and let $ \x = (x_1, \ldots, x_N ) $ be an $ N $-tuple of elements that are algebraically independent over $ K $.
	The cluster algebra $ \cA(\cQ) \subset K(\x) =  K(x_1, \ldots, x_n, \ldots, x_N) $ (defined over $ K $) corresponding to $ \cQ $ is isomorphic to
	\begin{equation}
		\label{eq:presentation1} 
		K[x_{n+1}^{\pm 1}, \ldots, x_{N}^{\pm 1}][x_1, \ldots, x_n, x_1', \ldots, x_n']
		/ \langle x_k x_k' - \prod_{i\rightarrow k} x_i^{b_{ik}} - \prod_{i \leftarrow k} x_i^{-b_{ik}} \mid k \in \{ 1, \ldots, n \} \rangle 
	\end{equation}
	where $ (x_1', \ldots, x_n') $ is another tuple of algebraically independent elements over $ K $, 
	the products range over those vertices $ i $ of $ \cQ $ for which there is an arrow $ i \rightarrow k $ (resp.~$ i \leftarrow k $),
	and
	$ b_{ik} \in \ZZ $ is the number of arrows $ i \rightarrow k $ which is negative if there are arrows $ i \leftarrow k $. 
	\\
	Moreover, 
	$ (x_k x_k' - \prod_{i\rightarrow k} x_i^{b_{ik}} - \prod_{i \leftarrow k} x_i^{-b_{ik}} \mid k \in \{ 1, \ldots, n \}) $ 
	is a Gr\"obner basis for the ideal which it generates with respect to any term order
	giving preference to $ x_1', \ldots, x_n' $ over $ \x $.  
\end{Thm}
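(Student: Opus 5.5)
This is the Berenstein--Fomin--Zelevinsky result for acyclic seeds, so the plan is to assemble it from the ingredients of \cite{BFZ2005} rather than argue from scratch. The approach has three steps. First, show that $\cA(\cQ)$ equals its lower bound $\mathcal{L}$. Second, describe $\mathcal{L}$ by generators and relations. Third, verify the Gr\"obner basis property, which also shows there are no further relations.

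\textbf{Lower bound.} Let $x_k'$ be the cluster variable obtained by mutating the initial seed at $k$. By the exchange relation,
\[
x_k x_k' = \prod_{i\to k} x_i^{b_{ik}} + \prod_{i\leftarrow k} x_i^{-b_{ik}} ,
\]
so each generator of \eqref{eq:presentation1} maps into $\cA(\cQ)$. Hence the ring in \eqref{eq:presentation1} surjects onto the lower bound $\mathcal{L}=K[x_{n+1}^{\pm1},\dots,x_N^{\pm1}][x_1,x_1',\dots,x_n,x_n']$. Since $\cQ$ is acyclic, the seed is acyclic, and \cite[Thm.~1.20]{BFZ2005} gives $\mathcal{L}=\mathcal{U}$ (the upper bound). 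Together with $\mathcal{L}\subseteq\cA(\cQ)\subseteq\mathcal{U}$, this yields $\cA(\cQ)=\mathcal{L}$. The frozen variables are inverted, matching the convention that coefficients are invertible.

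\textbf{Injectivity via standard monomials.} It remains to show that the only relations among the $x_k,x_k'$ are the exchange relations. I would use the standard monomials of \cite[Cor.~1.21]{BFZ2005}: the monomials $\prod x_k^{a_k}x_k'^{b_k}$ with $a_kb_k=0$ for all $k$ (times Laurent monomials in the frozen variables) form a $K$-basis of $\mathcal{L}$. Each exchange relation has leading term $x_kx_k'$ for any term order preferring the primed variables. Therefore the monomials not divisible by any $x_kx_k'$, which are exactly the standard monomials, span the quotient ring. Since their images in $\mathcal{L}$ are linearly independent, the surjection is injective.

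\textbf{Gr\"obner basis and the main obstacle.} The same linear-independence fact gives the Gr\"obner basis statement. The leading terms $x_kx_k'$ are pairwise coprime, so all S-pairs reduce to zero by Buchberger's criterion, and this holds independently of the previous step. Consequently the initial ideal is $\langle x_kx_k'\rangle$. The real obstacle is the linear independence of standard monomials, i.e.\ the acyclicity input of \cite{BFZ2005}. Their argument orders the vertices compatibly with the acyclic orientation, uses a degree or leading-term argument in the Laurent expansion to separate standard monomials, and handles the equality $\mathcal{L}=\mathcal{U}$ via coprimality. I would cite this rather than reprove it.
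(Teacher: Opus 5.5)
Your second and third steps are sound and match what \cite[Cor.~1.21]{BFZ2005} provides, which together with \cite[Thm.~1.20]{BFZ2005} is all the paper uses. The standard monomials span the quotient by the division algorithm, and they are independent in $\mathcal{L}$ by acyclicity, so the surjection is injective. The Gr\"obner property is formal, since the leading monomials $x_kx_k'$ are pairwise coprime.

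The gap is in your first step. You get $\cA(\cQ)=\mathcal{L}$ by sandwiching it between $\mathcal{L}$ and the upper bound $\mathcal{U}$, and you assert $\mathcal{L}=\mathcal{U}$ for every acyclic seed. That equality needs the seed to be \emph{coprime}, i.e.\ to have pairwise coprime exchange polynomials; you say so yourself in your last paragraph. Nothing in the hypotheses provides this, and the equality genuinely fails.

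Take trivial coefficients and $\cQ\colon 1\to 2\leftarrow 3$. Then $x_1x_1'=x_3x_3'=1+x_2$ and $x_2x_2'=1+x_1x_3$. Put $z=(1+x_2)/(x_1x_3)$.
\begin{itemize}
\item In the clusters adjacent at $1$ and at $3$ one has $z=x_1'/x_3$ and $z=x_3'/x_1$.
\item $z$ is clearly Laurent in the initial cluster, and also in the cluster adjacent at $2$, since $x_2=(1+x_1x_3)/x_2'$ there.
\end{itemize}
Hence $z\in\mathcal{U}$. On the other hand, in the Laurent expansion of a monomial in $x_1,x_1',x_2,x_2',x_3,x_3'$, negative powers of $x_1$ come only from $x_1'=(1+x_2)x_1^{-1}$. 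Likewise, negative powers of $x_3$ come only from $x_3'=(1+x_2)x_3^{-1}$. So for every element of $\mathcal{L}$, the coefficient of $x_1^{-1}x_3^{-1}$ in $K[x_2^{\pm1}]$ is divisible by $(1+x_2)^2$. For $z$ this coefficient is $1+x_2$, so $\mathcal{L}\subsetneq\mathcal{U}$.

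This is not exotic for the present paper. $1\to2\leftarrow3$ is itself a star with two rays of length one. More generally, in a star-shaped quiver with trivial coefficients, any two rays of length one pointing into the centre have the same exchange polynomial $1+u$.

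What you should invoke instead is the statement $\cA=\mathcal{L}$ for an arbitrary acyclic seed. This is the result the paper cites as \cite[Thm.~1.20]{BFZ2005}, and it needs no coprimality assumption.

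If you want to keep your sandwich argument, first reduce to the coprime case:
\begin{enumerate}
\item Adjoin one principal frozen vertex to each mutable vertex. The extended exchange matrix then has full rank, so the enlarged seed is coprime, and it is still acyclic.
\item For the enlarged seed, $\mathcal{L}=\mathcal{U}$ now holds, so your sandwich gives $\cA=\mathcal{L}$ there.
\item Specialize the added frozen variables to $1$. This sends cluster variables to the corresponding cluster variables of $\cQ$, and polynomial expressions in the $x_k,x_k'$ to such expressions. This yields $\cA(\cQ)=\mathcal{L}$.
\end{enumerate}
For the case actually used later, $\cQ_\star^{\prin}$, the seed is already coprime, so your route works there. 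It does not work for the theorem as stated.
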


Recall that there are no loops and no oriented $ 2 $-cycles in a quiver.
Therefore, the integers $ b_{ik} $ in \eqref{eq:presentation1} are well-defined
and the matrix $ (b_{ik})_{i,k \in \{ 1, \ldots, n \} } $ is skew-symmetric.

The variables corresponding to frozen vertices of a quiver are sometimes also called {\em coefficients}.
For a more detailed presentation of coefficients through tropical semi-fields, we refer to \cite[Section~2]{FZ2002}, \cite{FZ2007}, or \cite[Def.~3.1.1]{FWZ2016}.
If $ n = N $ in Thm.~\ref{Thm:FirstPresentation}, 
then all vertices are mutable and the cluster algebra is said to have {\em trivial coefficients}. 

We now introduce the central notion of our present work
generalizing \cite[Sec.~6]{BFMS23}. 

\begin{defi}
	A cluster algebra is called {\em star cluster algebra}
	if it is equal to $ \cA(\cQ) $ 
	for a quiver $ \cQ  $ of type star-shaped.
\end{defi}

\underline{\em Convention:}
Throughout the article we focus on star cluster algebras of totally mutable quivers $ \cQ $. 
Nonetheless we consider coefficients by adding additional frozen vertices and arrows to $ \cQ $ introducing suitable coefficients in the defining equations for $ \cA (\cQ) $ discussed in Thm.~\ref{Thm:FirstPresentation}, 
as we will explain in the following.

In order to understand the singularities of the commutative algebra determined by a cluster algebra with coefficients 
\cite{BFMS25} introduced the notion of a cluster algebra with generic coefficients.
Using them a unified approach to the respective singularity theory is possible. 
Let us recall \cite[Def.~3.2.1]{BFMS25} in the language of quivers, cf.~\cite[Rk.~3.2.2.(4)]{BFMS25}.

\begin{defi}
	\label{Def:gen_coeff}
	Let $ \cQ $ be a quiver.
	We define a new quiver $ \cQ^\gen $ in the following way: 
	For every mutable vertex $ k $ of $ \cQ $, 
	add two new frozen vertices $ k_s $ and $ k_t $ as well as exactly one arrow $ k_s \rightarrow k $ and another one $ k \to k_t $. 
	We call the corresponding cluster algebra $ \cA(\cQ^\gen) $ the 
	{\em cluster algebra with generic coefficients} associated to $ \cQ $. 
\end{defi}

The quiver pictured in Fig.~\ref{Fig:Ex_frozen} is $ \cQ_\star^\gen $ for a star-shaped quiver $ \cQ_\star $ with 
three rays of lengths one, two and four, respectively.

As we assume coefficients to be invertible,
it is possible to connect the cluster algebra with generic coefficients to the {\em cluster algebra with principal coefficients}. 
The latter is the cluster algebra $ \cA(\cQ^\prin) $
where $ \cQ^\prin $ is constructed from $ \cQ $ by adding for every mutable vertex $ k $ in $ \cQ $ one new frozen vertex $ k_c $ and one arrow $ k_c \to k $. 
For more details on this notion, we refer to \cite[Def.~3.1 and Rk.~3.2]{FZ2007}
or \cite[Sec.~4.1]{INT_complexes}.

\begin{lemma}[{\cite[Lemma~3.3.2]{BFMS25}}]
	\label{Lem:prin}
	Let $ \cQ $ be an acyclic quiver with $ N $ vertices of which $ 1, \ldots, n $ are mutable for some $ n \leq N $ and let $ K $ be a field. 
	There is the following isomorphism for the 
	spectra of the
	corresponding cluster algebras with generic and principal coefficients defined over $ K $, 
	\[
		\Spec(\cA ( \cQ^\gen ) ) \cong \Spec (\cA ( \cQ^\prin ) ) \times (K^\times)^n  
		\ .
	\]
\end{lemma}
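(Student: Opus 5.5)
The plan is to prove the isomorphism at the level of presentations, using Thm.~\ref{Thm:FirstPresentation} for both $\cQ^\gen$ and $\cQ^\prin$. Both quivers are acyclic, since we only attach frozen vertices that are sources or sinks at a single mutable vertex. Hence both algebras are quotients of Laurent-polynomial-extended polynomial rings by the exchange relations, one for each mutable $k \in \{1,\ldots,n\}$. In $\cQ^\gen$, the relation at $k$ reads
\[ x_k x_k' - c_{k_s}\, P_k - c_{k_t}\, Q_k , \]
where $P_k := \prod_{i\to k} x_i^{b_{ik}}$ and $Q_k := \prod_{i\leftarrow k} x_i^{-b_{ik}}$ are the monomials coming from $\cQ$, and $c_{k_s}, c_{k_t}$ are the invertible frozen variables. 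In $\cQ^\prin$, the relation at $k$ is $x_k x_k' - c_k P_k - Q_k$.

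The key step is a change of coordinates that absorbs one coefficient per mutable vertex. Set
\[ y_k' := c_{k_t}^{-1} x_k', \qquad c_k := c_{k_s} c_{k_t}^{-1}, \qquad u_k := c_{k_t}. \]
Since $c_{k_t}$ is a unit, the relation at $k$ becomes $c_{k_t}\bigl(x_k y_k' - c_k P_k - Q_k\bigr)$, and so it generates the same ideal as the principal relation $x_k y_k' - c_k P_k - Q_k$. Frozen variables of $\cQ$ itself (indices $n+1,\ldots,N$) are left unchanged.

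The substitution $(c_{k_s}, c_{k_t}) \mapsto (c_k, u_k)$ is an automorphism of $K[c_{k_s}^{\pm1}, c_{k_t}^{\pm1}]$, with inverse $c_{k_s} = c_k u_k$ and $c_{k_t} = u_k$. The substitution $x_k' \mapsto y_k'$ is an automorphism of the polynomial ring over this Laurent ring. Together they give a ring isomorphism of the ambient rings that carries the generic ideal onto the extension of the principal ideal. We then obtain
\[ \cA(\cQ^\gen) \cong \cA(\cQ^\prin) \otimes_K K[u_1^{\pm1},\ldots,u_n^{\pm1}], \]
and taking $\Spec$ yields the product with $(K^\times)^n$, read as the torus $\mathbb{G}_m^n = \Spec K[u_1^{\pm1},\ldots,u_n^{\pm1}]$.

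The main point requiring care is to verify that this identification is legitimate, since Thm.~\ref{Thm:FirstPresentation} gives the cluster algebras only up to isomorphism with the quotient rings. Once both sides are identified with these quotients, the argument is purely an ideal-theoretic isomorphism, so nothing further is needed. One also checks that the $b_{ik}$ among the vertices of $\cQ$ are unchanged by adding the frozen vertices; this holds because each new vertex touches only $k$.
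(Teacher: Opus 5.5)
Your proposal is correct: rescaling $x_k'$ by the unit $c_{k_t}$ and replacing $(c_{k_s},c_{k_t})$ by $(c_{k_s}c_{k_t}^{-1},c_{k_t})$ turns each generic exchange relation into a unit multiple of the principal one. This gives $\cA(\cQ^\gen)\cong\cA(\cQ^\prin)\otimes_K K[u_1^{\pm1},\ldots,u_n^{\pm1}]$, and the paper itself gives no proof beyond citing \cite[Lemma~3.3.2]{BFMS25}, whose argument is, as far as I can tell, essentially this rescaling of one coefficient per exchange relation.
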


This result motivates to focus on the singularity theory of cluster algebras with principal coefficients.

We end this section by deducing a suitable presentation of the cluster algebra with principal coefficients associated to a star-shaped quiver:
Let $ \cQ_\star $ be a star-shaped quiver and $ m $ rays which are of length $ \len_\alpha $ for $ \alpha \in \{ 1, \ldots, m \} $. 
Let $ \KK $ be a field.
Let  
\[  
	\c = (c_0, c_1^{(1)}, \ldots, c_{\len_1}^{(1)}, \ldots, c_1^{(\alpha)}, \ldots, c_{\len_\alpha}^{(\alpha)}, \ldots, c_1^{(m)}, \ldots, c_{\len_m}^{(m)} ) 
\] 
be algebraically independent elements
corresponding to the frozen vertices in $ \cQ_\star^\prin $.
We use the abbreviations
\[
	\cA_\star^\prin :=\cA(\cQ_\star^\prin )
	\quad 
	\mbox{ and } 
	\quad 
	K_\c := K[\c^{\pm 1}]
	= K\left[c_0^{\pm 1}, \ldots, c_{\len_m}^{(m) \pm 1}  \right]
	\ . 
\]

Let $ (\x, u ) = (x_1^{(1)}, \ldots, x_{\len_1}^{(1)}, \ldots, x_1^{(\alpha)}, \ldots, x_{\len_\alpha}^{(\alpha)}, \ldots, x_1^{(m)}, \ldots, x_{\len_m}^{(m)}, u ) $ be algebraically independent elements
corresponding to the mutable vertices of $ \cQ_\star^\prin $. 
By Thm.~\ref{Thm:FirstPresentation},
we have the following presentation of $ \cA_\star^\prin $:  
\begin{equation} 
	\label{eq:A_star}
	\cA_\star^\prin \cong 
	\KK_\c [\x , \y,  u, v  ] / \langle g,  f_i^{(\alpha)} \mid \alpha \in \{ 1, \ldots, m \},
\ 
i \in \{1, \ldots, \len_\alpha \} \rangle \ ,  
\end{equation} 
where $ (\y, v ) = (y_1^{(1)}, \ldots, y_{\len_1}^{(1)}, \ldots, y_1^{(\alpha)}, \ldots, y_{\len_\alpha}^{(\alpha)}, \ldots, y_1^{(m)}, \ldots, y_{\len_m}^{(m)}, v ) $ are additional algebraically independent elements
and
\begin{equation}
	\label{eq:gen-original}
	\left\{ \quad
	\begin{array}{rcll}
		g 
		& := &	
		\displaystyle 
		u v  - c_0 \prod_{\alpha=1}^m x_1^{(\alpha)} - 1 \ ,
		\\[15pt]
		%
		%
		f_{i}^{(\alpha)} 
		& := &
		x_i^{(\alpha)} y_i^{(\alpha)} - x_{i-1}^{(\alpha)} - c_i^{(\alpha)} x_{i+1}^{(\alpha)}  \ ,
		&
		\ \ \ 
		\mbox{for } i \in \{ 1, \ldots, \len_\alpha - 1 \} \ ,
		\\[10pt]
		f_{\len_\alpha}^{(\alpha)} 
		& := &
		x_{\len_\alpha}^{(\alpha)} y_{\len_\alpha}^{(\alpha)} - x_{\len_\alpha-1}^{(\alpha)} - c_{\len_\alpha}^{(\alpha)}   \ , 
	\end{array}
	\right.
\end{equation}
for $ \alpha \in \{ 1, \ldots, m \} $.
Here, we use the convention $ x_{0}^{(\alpha)} := u $.
Note that $ \{ 1, \ldots, \len_\alpha - 1 \} = \emptyset $ if $ \ell_\alpha = 1 $.

In order to make the numerous generators feasible to handle for our singularity theoretic investigations, 
we deduce a new presentation with fewer generators (Prop.~\ref{Prop:NewPresentation}).
As in \cite{BFMS23,BFMS25}, continuant polynomials are a useful tool for this task. 
For a detailed reference on the latter, we refer to \cite{Muir}.

\begin{defi}
	\label{Def:Continuants}
	The {\em continuant polynomials} can be defined via  
	$ P_0 := 1 $, $ P_1 (z_1) := z_1 $ and 
	$ P_n (z_1, \ldots, z_n) := z_{n} P_{n-1} (z_1, \ldots, z_{n-1}) - P_{n-2} (z_1, \ldots, z_{n-2}) $, for $ n \geq 2 $.  
\end{defi}

This is an ad-hoc definition. 
In fact, continuant polynomials are defined as determinants of tri-diagonal matrices and we only require the special case where the non-zero non-diagonal entries are $ - 1 $.
For more details, we refer to \cite[Section~4.1]{BFMS23} or \cite[Section~3]{BFMS25}.

\begin{prop}
	\label{Prop:NewPresentation}
	We have: 
	\[
	\cA_\star^\prin 
	\cong 
	\KK_\c[\z, u , w ]
	/
	\langle 
	F_1, \ldots, F_m, G
	\rangle
	\ , 
	\]
	where $ \z := ( z_1^{(\alpha)}, \ldots, z_{\len_\alpha + 1}^{(\alpha)} \mid \alpha \in \{ 1, \ldots, m \}) $
	and
	\[
	\begin{array}{rcll}
		F_\alpha 
		& := & 
		P_{\len_\alpha +1} (z_1^{(\alpha)}, \ldots, z_{\len_\alpha + 1}^{(\alpha)}) 
		- \lambda_\alpha u
		\ , 
		& \mbox{ for } \alpha \in \{ 1, \ldots, m \} \ , 
		\\[10pt]
		G 
		& := &
		\displaystyle 
		u w - 
		\prod_{\alpha=1}^m P_{\len_\alpha} (z_1^{(\alpha)},  \ldots, z_{\len_\alpha}^{(\alpha)}) - \mu 
		\ ,
	\end{array} 
	\]
	with
	$ \lambda_\alpha := {\rho_{1}^{(\alpha)}}^{-1} $
	and
	$ \displaystyle  \mu := c_0^{-1} \cdot \Big( \prod_{\alpha=1}^m 
	\rho_{2}^{(\alpha)} \Big)^{-1} $,
	where we define $ \rho_i^{(\alpha)} := \rho_i^{(\alpha)}(\c^{(\alpha)}) $ for $ i \in \{ 1, \ldots, \alpha+1 \} $ via
	\[ 
		\rho_{\alpha+1}^{(\alpha)} := 1 
		,
		\quad  
		\rho_{\alpha}^{(\alpha)} := c_{\len_\alpha}^{(\alpha)} 
		,
		\quad
		\mbox{ and } 
		\quad 
		\rho_i^{(\alpha)} := c_{i}^{(\alpha)}
		\, \rho_{i+2}^{(\alpha)}
		\ \ 
		\mbox{ for } i < \alpha 
		\ . 
	\] 
\end{prop}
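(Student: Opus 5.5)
The plan is to eliminate variables ray by ray in the presentation \eqref{eq:A_star}. On each ray we solve the equations $f_i^{(\alpha)}$ successively for $x_{i-1}^{(\alpha)}$, starting from the tip of the ray. Then we rescale the remaining variables by monomials in the invertible coefficients, so that the resulting polynomials become exactly the continuants of Def.~\ref{Def:Continuants}.

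First we count. Ray $\alpha$ contributes $2\len_\alpha$ variables $x_i^{(\alpha)},y_i^{(\alpha)}$ and $\len_\alpha$ equations. We keep the $\len_\alpha+1$ variables $x_{\len_\alpha}^{(\alpha)}, y_{\len_\alpha}^{(\alpha)},\ldots,y_1^{(\alpha)}$, which will become $z_1^{(\alpha)},\ldots,z_{\len_\alpha+1}^{(\alpha)}$ after rescaling. We eliminate $x_{\len_\alpha-1}^{(\alpha)},\ldots,x_1^{(\alpha)}$ using $f_{\len_\alpha}^{(\alpha)},\ldots,f_2^{(\alpha)}$. Since each $f_i^{(\alpha)}$ has the form $x_{i-1}^{(\alpha)} = (\text{polynomial in variables of higher index})$, each elimination step is an honest isomorphism of $\KK_\c$-algebras: we substitute and drop one variable together with one generator. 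The equation $f_1^{(\alpha)}$ then remains, and it expresses $u = x_0^{(\alpha)}$ as a polynomial in the kept variables; this will be $F_\alpha$.

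The key step is an induction along the ray. Writing $x_{\len_\alpha}^{(\alpha)} = a\, z_1^{(\alpha)}$ and $y_{j}^{(\alpha)} = b_j\, z_{\len_\alpha-j+2}^{(\alpha)}$ with suitable units $a,b_j\in\KK_\c^\times$, we show that $x_{i}^{(\alpha)}$ equals a unit multiple of $P_{\len_\alpha-i+1}(z_1^{(\alpha)},\ldots,z_{\len_\alpha-i+1}^{(\alpha)})$. The unit should be the quotient $\rho_{i+1}^{(\alpha)}$ (or a closely related quotient) of the products $\rho^{(\alpha)}$. The recursion $x_{i-1}=x_iy_i-c_ix_{i+1}$ has to match $P_n=z_nP_{n-1}-P_{n-2}$, and this forces the relation $\rho_i = c_i\rho_{i+2}$ from the statement. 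So we choose the units by this requirement and verify the two starting cases, $x_{\len_\alpha}$ and $x_{\len_\alpha-1}=x_{\len_\alpha}y_{\len_\alpha}-c_{\len_\alpha}$. The latter produces the initial value $\rho^{(\alpha)}=c_{\len_\alpha}^{(\alpha)}$ and the term $P_0=1$. I expect this bookkeeping to be the main obstacle: getting the index conventions of the $\rho_i^{(\alpha)}$ right and checking that all the scalings are consistent simultaneously.

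Granting this, $f_1^{(\alpha)}$ reads $\rho_1^{(\alpha)}$-times $P_{\len_\alpha+1}(z^{(\alpha)})$ equals $u$. Dividing by $\rho_1^{(\alpha)}$ yields $F_\alpha = P_{\len_\alpha+1} - \lambda_\alpha u$. Likewise $x_1^{(\alpha)} = \rho_2^{(\alpha)} P_{\len_\alpha}(z_1^{(\alpha)},\ldots,z_{\len_\alpha}^{(\alpha)})$. Substituting this into $g$ gives
\[
uv - c_0\Big(\prod_{\alpha=1}^m\rho_2^{(\alpha)}\Big)\prod_{\alpha=1}^m P_{\len_\alpha} - 1 .
\]
Dividing by the unit $c_0\prod_\alpha\rho_2^{(\alpha)}$ and setting $w := v\,\big(c_0\prod_\alpha\rho_2^{(\alpha)}\big)^{-1}$ produces $G$ with $\mu$ exactly as stated. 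Note that $u$ itself is not rescaled, which is why each ray carries its own constant $\lambda_\alpha$.
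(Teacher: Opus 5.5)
Your proposal is correct and follows the paper's proof. The paper does your rescaling first, setting $\tx_i^{(\alpha)} := x_i^{(\alpha)}/\rho_{i+1}^{(\alpha)}$ and $\ty_i^{(\alpha)} := y_i^{(\alpha)}\rho_{i+1}^{(\alpha)}/\rho_i^{(\alpha)}$ (your $a=1$, $b_i=\rho_i^{(\alpha)}/\rho_{i+1}^{(\alpha)}$), so the interior relations become $\rho_i^{(\alpha)}(\tx_i^{(\alpha)}\ty_i^{(\alpha)}-\tx_{i-1}^{(\alpha)}-\tx_{i+1}^{(\alpha)})$ via the identity $c_i^{(\alpha)}\rho_{i+2}^{(\alpha)}=\rho_i^{(\alpha)}$ you identified; it then eliminates $\tx_{\len_\alpha-1}^{(\alpha)},\ldots,\tx_1^{(\alpha)}$ by continuants, reads off $F_\alpha$ from $f_1^{(\alpha)}$, and rescales $v$ to get $G$, just as you do.
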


\begin{proof}
	Using that all $ c_i^{(\alpha)} $ are invertible, 
	we introduce the new variables
	\[
	\begin{array}{lcl} 
		\tx_i^{(\alpha)}
		:= 
		x_i^{(\alpha)} {\rho_{i+1}^{(\alpha)}}^{-1}
		& \
		\mbox{ and } 
		\ & 
		\ty_i^{(\alpha)}
		:= 
		y_i^{(\alpha)} \rho_{i+1}^{(\alpha)} \, {\rho_{i}^{(\alpha)}}^{-1}
		\ ,
	\end{array}
	\]
	for $ \alpha \in \{ 1, \ldots, m \} $ and $ i \in \{ 1, \ldots, \len_\alpha  \} $. 
	For $ \ell_\alpha = 1$, this leads to
	\[
		f_{\ell_\alpha}^{(\alpha)}
		\ = \
		f_1^{(\alpha)} 
		\ = \ 
		\rho_{1}^{(\alpha)} \cdot 
		\Big( 
		P_{2} (\tx_{1}^{(\alpha)}, \ty_{1}^{(\alpha)}) 
		- 
		\lambda_\alpha u  
		\Big) \ . 		
	\]
	For $ \ell_\alpha > 1 $, the coordinate transformation
	provides
	for the equations~\eqref{eq:gen-original} defining $ \cA_\star^\prin $:
	\[
	\begin{array}{rcll}
		f_{\len_\alpha}^{(\alpha)} 
		& := &
		\rho_\alpha^{(\alpha)} \cdot 
		\Big( 
		\tx_{\len_\alpha}^{(\alpha)} \ty_{\len_\alpha}^{(\alpha)} - \tx_{\len_\alpha-1}^{(\alpha)} - 1 
		\Big)   
		\ , 
		\\[10pt]
		f_{i}^{(\alpha)} 
		& = &
		\rho_{i}^{(\alpha)} \cdot 
		\Big(
		\tx_i^{(\alpha)} \ty_i^{(\alpha)} - \tx_{i-1}^{(\alpha)} - \tx_{i+1}^{(\alpha)} 
		\Big) 
		 \ ,
		&
		\ \ \ 
		\mbox{for } i \in \{ 2, \ldots, \len_\alpha - 1 \} \ .
	\end{array}
	\]
	We can drop these generators from \eqref{eq:gen-original}
	by applying the substitution
	\[
		\tx_i^{(\alpha)} 
		= 
		P_{\len_\alpha - i + 1 } (\tx_{\len_\alpha}^{(\alpha)}, \ty_{\len_\alpha}^{(\alpha)}, \ldots, \ty_{i + 1}^{(\alpha)}) 
		\ \ \
		\mbox{ for } 
		\alpha \in \{ 1, \ldots, m \}
		\mbox{ and }
		i \in \{ 1, \ldots, \len_\alpha-1\} 
		\ .
	\]
	Thus, we are left with $ g $ and 
	$ f_1^{(\alpha)} $ with $ \alpha \in \{ 1, \ldots, m \} $. 
	We have 
	\[
	\begin{array}{rcl}
		f_1^{(\alpha)} 
		& = & 
		\rho_{1}^{(\alpha)}
		\, \tx_{1}^{(\alpha)} \ty_{1}^{(\alpha)} - u - c_1^{(\alpha)} \rho_3^{(\alpha)}   \, \tx_{2}^{(\alpha)}  
		=
		\\[10pt]
		& = &
		\rho_{1}^{(\alpha)} \cdot 
		\Big( 
		P_{\len_\alpha+1} (\tx_{\len_\alpha}^{(\alpha)}, \ty_{\len_\alpha}^{(\alpha)}, \ldots, \ty_{1}^{(\alpha)}) 
		- 
		\lambda_\alpha u  
		\Big) \ . 
	\end{array} 
	\]
	Therefore, if we set
	$ ( z_1^{(\alpha)}, \ldots, z_{\len_\alpha + 1}^{(\alpha)} ) := (\tx_{\len_\alpha}^{(\alpha)}, \ty_{\len_\alpha}^{(\alpha)}, \ldots, \ty_{1}^{(\alpha)}) $,
	then we may replace
	$ f_1^{(\alpha)} $ by $ F_\alpha $ of the statement for every $ \alpha $.  
	Moreover, we get
	\[
		g = u v - c_0 \cdot \Big( \prod_{\alpha=1}^m 
		\rho_{2}^{(\alpha)} \Big) 
		\prod_{\alpha=1}^m P_{\len_\alpha} ( z_1^{(\alpha)}, \ldots, z_{\len_\alpha}^{(\alpha)}) - 1 
		\ . 
	\]
	By factoring $ \displaystyle c_0 \cdot \Big( \prod_{\alpha=1}^m 
	\rho_{2}^{(\alpha)} \Big)  $ from $ g $ 
	and defining $ \displaystyle w := v c_0^{-1} \cdot \Big( \prod_{\alpha=1}^m 
	\rho_{2}^{(\alpha)} \Big)^{-1}  = v \mu  $,
	we obtain $ G $.
	Hence, the assertion follows.  
\end{proof}

\begin{Bem}
	\label{Rk:eq_trivial1}
	\begin{enumerate}[(1)]
		\item 
	Observe that it is possible to eliminate the variable $ u $ via a suitable substitution,
	e.g., using the relation $ \lambda_1 u = P_{\len_1+1} ( z_1^{(1)}, \ldots, z_{\len_1 + 1}^{(1)} ) $.
	For $ \alpha \geq 2 $, we can then replace $ F_\alpha $ by 
	\[
		\lambda_1  P_{\len_\alpha+1}( z_1^{(\alpha)}, \ldots, z_{\len_\alpha + 1}^{(\alpha)} )
		- \lambda_\alpha P_{\len_1+1} ( z_1^{(1)}, \ldots, z_{\len_1 + 1}^{(1)} )
	\]
	and $ G $ becomes 
	\[
		P_{\len_1+1} ( z_1^{(1)}, \ldots, z_{\len_1 + 1}^{(1)} )
		\widetilde w - \prod_{\alpha=1}^m P_{\len_\alpha} ( z_1^{(\alpha)}, \ldots, z_{\len_\alpha}^{(\alpha)}) - \mu 
		\ ,
	\]
	where $ \widetilde w := w \lambda_1^{-1} $. 
	
	\item \label{it:trivial1}
	In the case of trivial coefficients, we have $ c_0 = c_i^{(\alpha)} = 1 $ 
	and thus $ \rho_i^{(\alpha)} = 1 $ as well as
	$ \lambda_\alpha = \mu = 1 $ 
	for all  $ \alpha \in \{ 1, \ldots, m \} $ and $ i \in \{ 1, \ldots, \len_\alpha  \} $. 
	\end{enumerate}
\end{Bem}

\section{Classification of the Singularities}
\label{Sec:Classi}

As in \cite[Section~1.4]{BFMS25},
we handle the coefficients $ \c $ by considering $ \Spec(\cA_\star^\prin) $ as family above $ S := \Spec(\KK_\c) $
(recall that $ \KK_\c = \KK [\c^\pm] $).
Hence, for a closed point $ \eta \in S $, 
we classify the singularities of the fiber 
$ \phi^{-1} (\eta ) $ of $ \phi \colon \Spec(\cA_\star^\prin) \to S $ in this section.
For convenience in notation, we fix a closed point $ \eta \in S $ and we set
\[  
	X := \phi^{-1} (\eta) 
	\subseteq \AA_\kappa^{\len_1+\cdots + \len_m+m+2} 
	\ ,
\] 
where $ \kappa = \kappa (\eta ) $ denotes the residue field of the local ring $ \mathcal{O}_{S,\eta} $.
For explicit defining equations for $ X $ and its embedding into $ \AA_\kappa^{\len_1+\cdots + \len_m+m+2} $, 
we use the presentation of $ \cA_\star^\prin $ deduced in Prop.~\ref{Prop:NewPresentation}. 
We abuse notation and use the same symbols $ F_\alpha, G, \lambda_\alpha $ and so on (defined in Prop.~\ref{Prop:NewPresentation}) for the respective elements in the fiber. 

In order to describe the singular locus of $ X $, we need some notation.
 
 \begin{defi}
 	\label{Def:partition}
 	For the positive integers $ \len_1, \ldots, \len_m \in \ZZ_+ $ and the invertible elements $\lambda_1, \ldots, \lambda_m \in \kappa^\times $,
 	we define: 
 	
 	\begin{enumerate}[(1)]
 	\item \label{item:compatible}
 	For $ \alpha, \beta \in \{ 1, \ldots, m \} $ 
 	we say that 
 	$ (\len_\alpha, \lambda_\alpha) $ and $ (\len_\beta, \lambda_\beta) $ 
 	are {\em compatible}
 	if 
 	$ \alpha = \beta $ or if the following conditions hold:
 	\begin{enumerate}[(a)]
 		\item
 		$ \len_\alpha = 2 \halflen_\alpha - 1 $ and $\len_\beta = 2 \halflen_\beta - 1  $ are both odd, 
 		for $ \halflen_\alpha, \halflen_\beta \in \ZZ_+ $
 		and
 		
 		\item
 		$ \lambda_\alpha (-1)^{\halflen_\alpha} = \lambda_\beta (-1)^{\halflen_\beta}  $. 
 	\end{enumerate}
 
	\item 
	For $ \alpha \in \{ 1, \ldots, m \} $,
	we define 
	\[
		I_\alpha := 
		\{
			\beta \in \{ 1, \ldots, m \}
			\mid 
			 (\len_\alpha, \lambda_\alpha) 
			\mbox{ and }
			(\len_\beta, \lambda_\beta) 
			\mbox{ are compatible} 
			\,
		\}
	\]
	and $ \sigma(I_\alpha) := \inf\{ \beta \in I_\alpha \}  $.
	Note that $ I_{\sigma(I_\alpha)} = I_\alpha $ if $ I_\alpha \neq \emptyset $.
 	
 	\item\label{Def:partition-item}
 	Define $ \alpha_1, \ldots, \alpha_s \in \{1, \ldots, m \} $ 
 	via:
 		for all $ \alpha \in \{ 1, \ldots, m \} $ with $ |I_{\alpha}| \geq 2 $ there is a unique $ \alpha_i $ with 
 		$ \alpha_i = \sigma(I_\alpha) $.
 	Moreover, we set $ I_0 := \{ 1, \ldots, m \} \setminus \bigcup_{i=1}^s I_{\alpha_i} $. 
 	Hence, this defines a partition of the index set into disjoint subsets:
 	\[
	 	\{ 1 , \ldots , m \}  = I_0 \cup I_{\alpha_1} \cup \cdots \cup I_{\alpha_s}
	 	\ .
 	\]
 \end{enumerate}
 \end{defi}
 
 \begin{example}
 	\label{Ex:Comaptible}
	\label{it:compat_A_1}
 	Suppose that $ \ell_\alpha = 2 k_\alpha - 1 $ 
 	and $ \lambda_\alpha = (-1)^{k_\alpha} $
 	for all $ \alpha \in \{ 1 , \ldots, m \} $. 
 	Then, all $ (\ell_\alpha, \lambda_\alpha) $ are compatible and thus $ \{ 1 , \ldots, m \} = I_1 $. 
 \end{example}

Recall that we introduced $ \displaystyle  \mu := c_0^{-1} \cdot \Big( \prod_{\alpha=1}^m 
	\rho_{2}^{(\alpha)} \Big)^{-1} $ in Prop.~\ref{Prop:NewPresentation}.

\begin{Thm}[Singular locus]
	\label{Thm:Sing} 
	The singular locus of $ \XX $ is described as follows:
	\begin{enumerate}[(1)]
		\item\label{Prop:Sing_origin} 
		If $ \len_\alpha = 2 \halflen_\alpha $ is even for all $ \alpha \in \{ 1 , \ldots , m \} $
		and $ (-1)^{\halflen_1 + \cdots + \halflen_m} + \mu  = 0 $,
		then 
		\[
		\Sing ( \XX )  \cong D \ , 
		\] 
		where $ D $ is the origin of $ \AA_\kappa^{\len_1 + \cdots + \len_m+m+2} $.
		
		\item\label{Prop:Sing_components} 
		Using the notation of Definition~\ref{Def:partition}\eqref{Def:partition-item}, if $ s \geq 1 $, 
		then 
		\[  
			\Sing ( \XX ) \cong  \bigcup_{i=1}^s  C_i 
		\	,
		\qquad
		\mbox{ with } 
		\quad 
		C_i := C_{\alpha_i} := \bigcup_{\substack{\alpha,\beta \in I_{\alpha_i} \\ \alpha \neq \beta }} C_{\alpha,\beta}
		\ ,
		\]
		where 
		\[
		C_{\alpha,\beta}
		:=
		V (
		\z^{(\alpha)}, 
		\z^{(\beta)}, 
		u - \xi_u, 
		w - \xi_w 	) 	 
		\ \cap \
		\bigcap_{\substack{\gamma = 1 \\ \gamma \neq \alpha,\beta}}^m 
		V ( P_{\len_\gamma +1} ( z_1^{(\gamma)}, \ldots, z_{\len_\gamma + 1}^{(\gamma)} ) 
		- \xi_\gamma  )
		\ ,
		\]
		for 
		$ \z^{(\alpha)} := (z_1^{(\alpha)}, \ldots, z_{\len_\alpha +1}^{(\alpha)} ) $, $  \z^{(\beta)} := (z_1^{(\beta)}, \ldots, z_{\len_\beta +1}^{(\beta)} ) $, 
		$ \xi_u := (-1)^{\halflen_\alpha}  \lambda_\alpha^{-1} $,
		$ \xi_w := (-1)^{\halflen_\alpha}\lambda_\alpha\mu $,
		and $ \xi_\gamma := (-1)^{\halflen_\alpha} \lambda_\gamma \lambda_\alpha ^{-1}  $.

		\item 
		In all other cases, $ \Sing(X) = \emptyset $, so $ X $ is regular.
	\end{enumerate}
\end{Thm}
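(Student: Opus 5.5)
The plan is to apply the Jacobian criterion to the presentation of Prop.~\ref{Prop:NewPresentation}. There, $X$ is cut out by the $m+1$ equations $F_1,\dots,F_m,G$ in $\AA_\kappa^{\len_1+\cdots+\len_m+m+2}$. Each $F_\alpha$ involves its own block of variables $\z^{(\alpha)}$ together with $u$, so the equations are in triangular form with respect to these blocks. I will therefore treat $X$ as a complete intersection of codimension $m+1$; this can be checked directly or taken from the fact that $\cA_\star^\prin$ is a cluster algebra of the expected dimension. A point of $X$ is then singular precisely when the Jacobian matrix has rank $<m+1$, i.e.\ when there is a nontrivial relation $\sum_\alpha a_\alpha \nabla F_\alpha + b\,\nabla G=0$.

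Two facts about continuants are needed; I would prove both by induction from Definition~\ref{Def:Continuants} or take them from \cite{BFMS23}.
\begin{enumerate}[(a)]
\item $\partial P_n/\partial z_i = P_{i-1}(z_1,\dots,z_{i-1})\,P_{n-i}(z_{i+1},\dots,z_n)$.
\item $\nabla P_n(z_1,\dots,z_n)=0$ forces $n=2k$ even and $z=0$. The reason is that $\partial_{z_1}P_n=P_{n-1}(z_2,\dots,z_n)$ and $\partial_{z_n}P_n=P_{n-1}(z_1,\dots,z_{n-1})$ cannot vanish together with the middle minors unless everything vanishes; this is the Cassini-type identity $P_n(z_1,\dots,z_n)P_{n-2}(z_2,\dots,z_{n-1})-P_{n-1}(z_1,\dots,z_{n-1})P_{n-1}(z_2,\dots,z_n)=-1$ iterated inward.
\end{enumerate}
In that case $P_{2k}(0)=(-1)^k$ and $P_{2k-1}(0)=0$.

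Now split according to $b$. Note that the $w$-column contains only the entry $u$ from $\nabla G$, and the $u$-column has entries $-\lambda_\alpha$ and $w$.

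\emph{Case $b=0$.} Looking block by block, every $\alpha$ with $a_\alpha\neq0$ satisfies $\nabla P_{\len_\alpha+1}(\z^{(\alpha)})=0$. By (b) this gives $\len_\alpha=2\halflen_\alpha-1$ odd and $\z^{(\alpha)}=0$. The $u$-column gives $\sum a_\alpha\lambda_\alpha=0$, so at least two indices $\alpha\neq\beta$ occur. From $F_\alpha$ we get $u=(-1)^{\halflen_\alpha}\lambda_\alpha^{-1}$, and equating these values for $\alpha$ and $\beta$ is exactly compatibility in Definition~\ref{Def:partition}. Since $P_{\len_\alpha}(0)=0$, the equation $G$ gives $uw=\mu$, i.e.\ $w=\xi_w$. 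The remaining $F_\gamma$ give $P_{\len_\gamma+1}(\z^{(\gamma)})=\lambda_\gamma u=\xi_\gamma$. This is exactly $C_{\alpha,\beta}$. Conversely, every point of $C_{\alpha,\beta}$ admits the relation with $a_\alpha=\lambda_\alpha^{-1}$, $a_\beta=-\lambda_\beta^{-1}$ and all other coefficients zero. For different classes $I_{\alpha_i}$ the values of $u$ differ, which gives the disjointness of the $C_i$.

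\emph{Case $b\neq0$.} The $w$-column forces $u=0$, so every $F_\alpha$ gives $P_{\len_\alpha+1}=0$, and $G$ gives $\prod_\alpha P_{\len_\alpha}=-\mu\neq0$. In particular every $P_{\len_\alpha}(z_1^{(\alpha)},\dots,z_{\len_\alpha}^{(\alpha)})\neq0$. The $z^{(\alpha)}_{\len_\alpha+1}$-entry of $\nabla G$ is $0$, while by (a) that entry of $\nabla F_\alpha$ is $P_{\len_\alpha}\neq0$; hence $a_\alpha=0$. The relation then forces $\nabla P_{\len_\alpha}=0$, so by (b) $\len_\alpha=2\halflen_\alpha$ is even and $z^{(\alpha)}_1=\cdots=z^{(\alpha)}_{\len_\alpha}=0$. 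Next, $0=P_{\len_\alpha+1}=z^{(\alpha)}_{\len_\alpha+1}(-1)^{\halflen_\alpha}$ gives $z^{(\alpha)}_{\len_\alpha+1}=0$, and the $u$-column gives $w=0$. So the point is the origin, and $G$ there reads $\prod_\alpha(-1)^{\halflen_\alpha}+\mu=0$. This is case~(1). The two cases are mutually exclusive, since one requires all $\len_\alpha$ even and the other needs at least two odd ones, which yields~(3) in all remaining situations.

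The main obstacle is establishing fact (b) cleanly, including the precise value of $P_n(0)$. I would handle it by induction on $n$ using (a) and the Cassini identity.
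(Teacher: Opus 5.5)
Your argument is correct in substance and is organised more efficiently than the paper's. The paper extracts necessary conditions from particular maximal minors (columns $z^{(\alpha)}_{\len_\alpha+1}$, $u$, $w$) to get \eqref{eq:sing_1}. This splits into three cases:
\begin{itemize}
\item $u=w=0$, treated with Prop.~\ref{Prop:ProdContSing};
\item a mixed case, excluded by Lemma~\ref{Lem:ResultsContinuants}\eqref{Lem:ResultsContinuants_consecutive};
\item two vanishing $P_{\len_\alpha}$, where it passes through the sets $C_{\alpha,\beta,\sigma,\tau}$ and detects compatibility as the condition for $V(\lambda_\beta F_\alpha-\lambda_\alpha F_\beta)$ to be singular.
\end{itemize}
You instead describe the left kernel of the Jacobian and split on whether $\nabla G$ participates. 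This gives both inclusions at once: your explicit kernel vector on $C_{\alpha,\beta}$, and at the origin the one line $\nabla G(0)=0$, which you should add for the converse. It also gets $P_{\len_\alpha}\neq 0$ straight from $G$, and makes compatibility visible as the agreement of the two values of $u$ imposed by $F_\alpha$ and $F_\beta$. What the paper's route buys is that it already isolates the local equation $\lambda_\beta F_\alpha-\lambda_\alpha F_\beta$ with its $A_1$-singularity, which Thm.~\ref{Thm:ClassSing} reuses. Your fact (b) need not be reproved: if $\nabla P_n(p)=0$ at a geometric point $p$, apply Lemma~\ref{Lem:ResultsContinuants}\eqref{Lem:ResultsContinuants_singu} to $P_n-P_n(p)$.

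The one genuine omission is your opening equivalence. Rank deficiency of the Jacobian characterises the non-smooth locus. That equals the non-regular locus $\Sing(X)$ only when $\kappa$ is perfect, or at points with separable residue field. Since $K$ is arbitrary, $\kappa$ may be imperfect, and then your argument only shows $\Sing(X)\subseteq D\cup\bigcup_i C_i$. The paper handles this with Zariski's criterion and a $p$-basis.

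In your framework there is a short fix. At any $x\in C_{\alpha,\beta}$ the ideal of $X$ is generated by $F_\alpha$, $\lambda_\beta F_\alpha-\lambda_\alpha F_\beta$, the $F_\gamma$ with $\gamma\neq\alpha,\beta$, and $G$. By compatibility,
\[
\lambda_\beta F_\alpha-\lambda_\alpha F_\beta=\lambda_\beta\bigl(P_{\len_\alpha+1}(\z^{(\alpha)})-(-1)^{\halflen_\alpha}\bigr)-\lambda_\alpha\bigl(P_{\len_\beta+1}(\z^{(\beta)})-(-1)^{\halflen_\beta}\bigr)\in(\z^{(\alpha)},\z^{(\beta)})^2\subseteq\mathfrak m_x^2 ,
\]
since every non-constant monomial of $P_{2k}$ has degree at least $2$. $X$ is a complete intersection of codimension $m+1$; rely on your direct check for this, since the dimension of $\cA_\star^\prin$ alone does not bound the dimension of special fibres. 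So one generator lying in $\mathfrak m_x^2$ forces the embedding dimension of $\mathcal O_{X,x}$ to exceed its dimension, and $x$ is singular whatever its residue field. At the origin in case (1) the same argument works with $G\in\mathfrak m_0^2$, and that point is $\kappa$-rational anyway.
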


Note that $ C_{\alpha,\beta} = C_{\beta,\alpha} $ since $(\ell_\alpha, \lambda_\alpha) $ and $(\ell_\beta, \lambda_\beta) $ are compatible. 
	Further,
observe that $ C_i \cap C_j = \emptyset $ for $ i,j \in \{ 1, \ldots, s \} $ with $ i \neq j $ in Thm.~\ref{Thm:Sing}\eqref{Prop:Sing_components}:
This follows since $ \alpha \in I_{\alpha_i} $ and $ \widetilde \alpha \in I_{\alpha_j} $ with $ i \neq j $ are not compatible, 
i.e.,
$ (-1)^{\halflen_\alpha}  \lambda_\alpha^{-1} \neq (-1)^{\halflen_{\widetilde \alpha}}  \lambda_{\widetilde \alpha}^{-1} $.

\begin{example}
	Let us pick up Example~\ref{Ex:Comaptible}. 
		If  $ \ell_\alpha = 2 k_\alpha - 1 $ 
		and $ \lambda_\alpha = (-1)^{k_\alpha} $
		for all $ \alpha \in \{ 1 , \ldots, m \} $,
		then 
		$ \Sing(X) = C_1 $ and 
		for $ \alpha, \beta \in I_1 = \{ 1, \ldots, m \} $ with $ \alpha \neq \beta $, 
		we obtain 
		\[
		C_{\alpha,\beta}
		:=
		V (
		\z^{(\alpha)}, 
		\z^{(\beta)}, 
		u - 1, 
		w - \mu 	) 	 
		\ \cap \
		\bigcap_{\substack{\gamma = 1 \\ \gamma \neq \alpha,\beta}}^m 
		V ( P_{\len_\gamma +1} ( z_1^{(\gamma)}, \ldots, z_{\len_\gamma + 1}^{(\gamma)} ) 
		- (-1)^{k_\gamma} )
		\ .
		\]
\end{example}

As preparation for the proof of Thm.~\ref{Thm:Sing}, we recall some facts about continuant polynomials
(Def.~\ref{Def:Continuants}) and their singularities.

\begin{lemma}
	\phantomsection 
	\label{Lem:ResultsContinuants}
	\begin{enumerate}[(1)]
		\item\label{Lem:ResultsContinuants_Pn(0)} 
		For $ k \in \ZZ_{\geq 0} $,
		we have
		$ P_{2k+1} (0, \ldots, 0 ) = 0 $,
		$ P_{2k+1} (0, \ldots, 0, z_n ) = (-1)^k z_n $,
		and $ P_{2k} (0, \ldots, 0) = P_{2k} (0, \ldots, 0, z_n) = (-1)^k $.
		
		\item\label{Lem:ResultsContinuants_deriv}
		\color{black} 
		For $ j \in \{ 1, \ldots, n \} $, we have
		\[
			\frac{\partial P_n(z_1, \ldots, z_n)}{\partial z_j} 
			= P_{j-1}(z_1, \ldots, z_{j-1})\cdot P_{n-j}(z_{j+1}, \ldots, z_n) 
			\ .
		\]
		
		\item\label{Lem:ResultsContinuants_consecutive} 
		There is no point at which two consecutive continuant polynomials vanish, i.e.,
		\[  
			V ( P_n (z_1, \ldots, z_n), P_{n-1} (z_1, \ldots, z_{n-1}) ) = \emptyset 
			\ .
		\]

		\item\label{Lem:ResultsContinuants_singu} 
		The variety $ \mathfrak X := V(P_n (z_1, \ldots, z_n ) + \lambda) \subseteq \AA_\kappa^n $ with $ \lambda \in \kappa $ is singular if and only if
		$ n = 2 \halflen $ is even and $ \lambda = (-1)^{\halflen+1} $.
		In the singular case,  $ \Sing ( \mathfrak{X} ) = V (z_1, \ldots, z_n )  $ and $  \mathfrak{X} $ has an isolated singularity of type $ A_1 $ at the origin.   
	\end{enumerate}
\end{lemma}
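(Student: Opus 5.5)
Parts \eqref{Lem:ResultsContinuants_Pn(0)}--\eqref{Lem:ResultsContinuants_consecutive} follow by induction on $n$ from the recursion $P_n = z_n P_{n-1} - P_{n-2}$ of Def.~\ref{Def:Continuants}; part \eqref{Lem:ResultsContinuants_singu} then combines them.

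For \eqref{Lem:ResultsContinuants_Pn(0)}, evaluating the recursion at zero gives $P_n(0,\ldots,0) = -P_{n-2}(0,\ldots,0)$. With $P_0 = 1$ and $P_1(0)=0$ this yields $P_{2k}(0)=(-1)^k$ and $P_{2k+1}(0)=0$. If only the last entry $z_n$ is kept, then $P_n(0,\ldots,0,z_n) = z_n P_{n-1}(0,\ldots,0) - P_{n-2}(0,\ldots,0)$. For $n=2k+1$ this equals $(-1)^k z_n - 0$. For $n=2k$ it equals $0\cdot z_n - (-1)^{k-1} = (-1)^k$.

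For \eqref{Lem:ResultsContinuants_deriv}, I would use the standard symmetric expansion $P_n(z_1,\ldots,z_n) = z_j P_{j-1}(z_1,\ldots,z_{j-1})P_{n-j}(z_{j+1},\ldots,z_n) - P_{j-2}(z_1,\ldots,z_{j-2})P_{n-j}(z_{j+1},\ldots,z_n) - P_{j-1}(z_1,\ldots,z_{j-1})P_{n-j-1}(z_{j+2},\ldots,z_n)$. It holds for any $j$ (with $P_{-1}:=0$) and can be seen either by Laplace expansion of the tridiagonal determinant along row $j$, or by induction on $n-j$ via the recursion. Only the first term involves $z_j$, and linearly, which gives the formula.

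For \eqref{Lem:ResultsContinuants_consecutive}, suppose $P_n$ and $P_{n-1}$ both vanish at a point. The recursion gives $P_{n-2} = z_nP_{n-1} - P_n = 0$, so $P_{n-1}$ and $P_{n-2}$ also vanish there. Descending in this way, we eventually reach $P_1 = P_0 = 0$, which contradicts $P_0=1$.

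For \eqref{Lem:ResultsContinuants_singu}, which is the main step, I would apply the Jacobian criterion. By \eqref{Lem:ResultsContinuants_deriv}, $\partial P_n/\partial z_1 = P_{n-1}(z_2,\ldots,z_n)$ and $\partial P_n/\partial z_2 = z_1 P_{n-2}(z_3,\ldots,z_n)$. By the reversal symmetry of continuants (the tridiagonal determinant is invariant under reversing the order), $P_n(z_1,\ldots,z_n)=P_n(z_n,\ldots,z_1)$, so the analogous partials from the other end are available as well. The key move is an induction on the variables: at a singular point, $\partial_{z_1}$ vanishing gives $P_{n-1}(z_2,\ldots,z_n)=0$. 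By \eqref{Lem:ResultsContinuants_consecutive}, applied to the tail sequence, $P_{n-2}(z_3,\ldots,z_n)\ne 0$. Then $\partial_{z_2}=0$ forces $z_1=0$. Repeating with the general $j$-th partial $P_{j-1}(z_1,\ldots,z_{j-1})P_{n-j}(z_{j+1},\ldots,z_n)$, and using the vanishing of $z_1,\ldots,z_{j-1}$ together with \eqref{Lem:ResultsContinuants_Pn(0)} to control the first factor, one successively obtains $z_j=0$ for all $j$.

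The expected obstacle is organizing this induction cleanly. A convenient formulation: once $z_1=\cdots=z_{j-1}=0$, we have $P_{j-1}(0,\ldots,0)\ne0$ for $j-1$ even, and $P_{j-2}(0,\ldots,0)\neq 0$ otherwise, and one alternates between these. Alternatively, one can appeal to \cite[Section~4.1]{BFMS23}, where this is proven.

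Once the singular point is known to be the origin, we need $P_n(0)+\lambda=0$ and all partials vanishing at $0$. The partial $P_{j-1}(0)P_{n-j}(0)$ vanishes for every $j$ exactly when one of $j-1$ and $n-j$ is odd for every $j$, i.e.\ when $n$ is even. For $n=2k$, part \eqref{Lem:ResultsContinuants_Pn(0)} gives $\lambda=-(-1)^k=(-1)^{k+1}$. Conversely, in this case the origin is singular. For the $A_1$ type, the quadratic part of $P_{2k}$ at $0$ is $\sum_{i=1}^{k} \pm z_{2i-1}z_{2i}$ up to sign conventions (the pairs $z_{2i-1}z_{2i}$ occur with $P_{2i-2}(0)P_{2k-2i}(0)\ne0$). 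This is a nondegenerate quadratic form in $2k$ variables, so the singularity is of type $A_1$ in every characteristic.
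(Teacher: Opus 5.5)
Parts (1)--(3) are correct. For (1) and (2) you argue via the recursion and the splitting identity, where the paper uses Muir's combinatorial rule for the terms and cites Muir's derivative formula. Your (3) is the paper's descent. For (4) the paper simply cites \cite[Lemma~3.7 and Prop.~3.8]{BFMS25}, and your direct argument there has two concrete problems.

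First, the left-to-right induction cannot produce $z_j=0$ for even $j$. Your inductive hypothesis $z_1=\cdots=z_{j-1}=0$ already contains $z_2=0$, which the argument never supplies. If $z_1=z_3=\cdots=z_{2i-1}=0$, then $P_{2i}(z_1,\ldots,z_{2i})=(-1)^i$ and $P_{2i+1}(z_1,\ldots,z_{2i+1})=(-1)^i z_{2i+1}$, whatever $z_2,z_4,\ldots,z_{2i}$ are. Hence the partials $\partial P_n/\partial z_j=P_{j-1}(z_1,\ldots,z_{j-1})P_{n-j}(z_{j+1},\ldots,z_n)$ only yield two kinds of information: $P_{n-2i-1}(z_{2i+2},\ldots,z_n)=0$ (from $j=2i+1$), and then $z_{2i+1}=0$ (from $j=2i+2$). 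For even $j$ no left factor involves $z_j$, and $P_{j-2}(0,\ldots,0)\neq 0$ only gives one more suffix relation. The missing step is to use those suffix relations with the front recursion:
\[
P_{n-2i+1}(z_{2i},\ldots,z_n)=z_{2i}\,P_{n-2i}(z_{2i+1},\ldots,z_n)-P_{n-2i-1}(z_{2i+2},\ldots,z_n).
\]
Here the left-hand side and the last term vanish (from $j=2i-1$ and $j=2i+1$), and $P_{n-2i}(z_{2i+1},\ldots,z_n)\neq 0$ by (3). So $z_{2i}=0$, and $z_n=P_1(z_n)=0$ comes directly from $j=n-1$. Equivalently, rerun your argument on the reversed sequence, which for even $n$ swaps odd and even positions. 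For odd $n$ the process ends at $j=n$ with $\partial P_n/\partial z_n=(-1)^{(n-1)/2}\neq 0$, so there is no singular point at all.

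Second, your description of the quadratic part of $P_{2k}$ at the origin is wrong for $k\geq 2$. Applying (2) twice, the coefficient of $z_az_b$ with $a<b$ is $P_{a-1}(0)P_{b-a-1}(0)P_{2k-b}(0)$. This is $\pm1$ for every odd $a$ and every even $b>a$, not only for $b=a+1$. For example, $P_4=z_1z_2z_3z_4-z_1z_2-z_1z_4-z_3z_4+1$. The quadratic part is $(-1)^{k+1}\sum_{1\le i\le j\le k} z_{2i-1}z_{2j}$, which is exactly the list of terms in the paper's proof of (1). Your nondegeneracy conclusion still holds, but for a different reason. The unitriangular substitution $w_i:=z_{2i}+z_{2i+2}+\cdots+z_{2k}$ turns the form into $(-1)^{k+1}\sum_{i=1}^{k} z_{2i-1}w_i$, which is nondegenerate in every characteristic. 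This computation has to replace your claim before the $A_1$ conclusion is justified.
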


\begin{proof}
	First, we prove \eqref{Lem:ResultsContinuants_Pn(0)}:
	By \cite[Number~545]{Muir},
	all terms appearing in $ P_n(z_1, \ldots, z_n ) $ 
	are obtained from $ z_1 \cdots z_n $ by replacing every pair of
	consecutive $ z_i $ by $ -1 $. 
	Therefore the terms of order $ \leq 2 $ of $ P_n(z_1, \ldots, z_n ) $ are the following
	(cf.~\cite[Lemma~3.6]{BFMS25}):
	\begin{enumerate}[(i)]
		\item $ z_1 + z_3 + \cdots + z_{4m+1} $, \quad  if $ n = 4m+1 $,
		\item $ -1 + z_1 z_2  + z_1 z_4 + \cdots + z_1 z_{4m+2} + z_3 z_4 + \cdots + z_{4m+1} z_{4m+2} $, \quad  if $ n = 4m+2 $, 
		\item $ - z_1 - z_3 + \cdots - z_{4m+3} $, \quad  if $ n = 4m+3 $,
		\item $ 1 - z_1 z_2  - z_1 z_4 - \cdots - z_1 z_{4m+4} - z_3 z_4 - \cdots - z_{4m+3} z_{4m+4} $, \quad  if $ n = 4m+4 $. 
	\end{enumerate}	
	This provides Part~\eqref{Lem:ResultsContinuants_Pn(0)}.
	\\
	The formula for the derivative, Part \eqref{Lem:ResultsContinuants_deriv}, is a well known fact proven in \cite[Number 561 (4)]{Muir}.
	\\
	Part~\eqref{Lem:ResultsContinuants_consecutive} is a straight forward induction using the recursion of Def.~\ref{Def:Continuants},
	$ P_n (z_1, \ldots, z_n) = z_{n} P_{n-1} (z_1, \ldots, z_{n-1}) - P_{n-2} (z_1, \ldots, z_{n-2}) $.
	\\
	Part~\eqref{Lem:ResultsContinuants_singu} is proven in \cite[Lemma~3.7 and Prop.~3.8]{BFMS25}
\end{proof}

Next, we show the following generalization of Lemma~\ref{Lem:ResultsContinuants}\eqref{Lem:ResultsContinuants_singu}, 
which will be needed for the proof of Thm.~\ref{Thm:Sing}. 

\begin{prop}
	\label{Prop:ProdContSing}
	Let $ \mathfrak{X} := V(h) \subseteq \AA_\kappa^{\len_1+\cdots + \len_m} $,
	where
	\[
		h := \prod_{\alpha=1}^m P_{\len_\alpha} (z_1^{(\alpha)}, \ldots, z_{\len_\alpha}^{(\alpha)}) + \delta 
		\in \kappa [ z_1^{(\alpha)}, \ldots, z_{\len_\alpha}^{(\alpha)} \mid \alpha \in \{ 1, \ldots, m \}]
	\]
	with 
	$ \delta \in \kappa $. 
	If $ \delta $ is invertible, 
	then $ \mathfrak{X} $ is singular if and only if the following two conditions hold:
	\begin{enumerate}[(a)]
		\item[(a)] 
		$ \len_\alpha = 2 \halflen_\alpha $ is even, 
		with $  \halflen_\alpha \geq 1 $,
		for all $ \alpha \in \{ 1, \ldots, m \} $, and 
		
		\item[(b)] 
		$ (-1)^{\halflen_1 + \cdots + \halflen_m} + \delta  = 0 $.
	\end{enumerate}
	In the singular case, $ \Sing (\mathfrak{X}) $ is the origin and the hypersurface $ \mathfrak{X} $ has an isolated singularity of type $ A_1 $ at the origin. 
	\\
	On the other hand, if $ \delta = 0 $, then 
	$ \mathfrak{X} $ is a simple normal crossing divisor with $ m $ disjoint irreducible components, 
	i.e., after a suitable choice of coordinates $ h $ is of the form $ x_1 \cdots x_m $ with $ (x_1, \ldots, x_m) $ part of the chosen coordinates. 
\end{prop}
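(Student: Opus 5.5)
Write $Q_\alpha := P_{\len_\alpha}(z_1^{(\alpha)},\ldots,z_{\len_\alpha}^{(\alpha)})$, so $h = \prod_\alpha Q_\alpha + \delta$. The plan is to compute the Jacobian criterion blockwise. By Lemma~\ref{Lem:ResultsContinuants}\eqref{Lem:ResultsContinuants_deriv}, for each $z_j^{(\alpha)}$ we have
\[
\frac{\partial h}{\partial z_j^{(\alpha)}} = \Big(\prod_{\beta\neq\alpha} Q_\beta\Big)\cdot \frac{\partial Q_\alpha}{\partial z_j^{(\alpha)}} .
\]

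\textbf{Case $\delta$ invertible.} At a point of $\mathfrak X$ we have $\prod_\alpha Q_\alpha = -\delta \neq 0$, so every $Q_\beta$ is nonzero there. Hence the point is singular if and only if, for each $\alpha$, all partial derivatives of $Q_\alpha$ vanish. For each $\alpha$ this means the point $\z^{(\alpha)}$ is a singular point of the hypersurface $V(Q_\alpha - Q_\alpha(\z^{(\alpha)}))$, where $Q_\alpha(\z^{(\alpha)}) \neq 0$.

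By Lemma~\ref{Lem:ResultsContinuants}\eqref{Lem:ResultsContinuants_singu}, $V(P_n + \lambda)$ is singular only if $n = 2k$ is even and $\lambda = (-1)^{k+1}$, and then only at the origin. Therefore each $\len_\alpha = 2\halflen_\alpha$ must be even and $\z^{(\alpha)} = 0$, with $Q_\alpha(0) = (-1)^{\halflen_\alpha}$ by Lemma~\ref{Lem:ResultsContinuants}\eqref{Lem:ResultsContinuants_Pn(0)}. The condition $h(0) = 0$ then reads $(-1)^{\sum \halflen_\alpha} + \delta = 0$. The converse direction is immediate from the same formulas. This gives (a), (b), and shows that the singular locus is the origin.

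For the $A_1$ type, we read off the quadratic part at the origin. Each $Q_\alpha = (-1)^{\halflen_\alpha}\big(1 - q_\alpha + \text{higher order}\big)$, where $q_\alpha$ is the nondegenerate quadratic form listed in the proof of Lemma~\ref{Lem:ResultsContinuants}. Nondegeneracy is exactly the isolated $A_1$ statement of Lemma~\ref{Lem:ResultsContinuants}\eqref{Lem:ResultsContinuants_singu}. Expanding the product,
\[
h = \pm\Big(\sum_\alpha q_\alpha\Big) + \text{higher order},
\]
and the constant terms cancel by (b). Since the $q_\alpha$ live in disjoint variables, the quadratic part is a nondegenerate form in all $\len_1+\cdots+\len_m$ variables. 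The Morse lemma / splitting lemma (valid in any characteristic for this normal form, as in \cite{BFMS25}) then gives type $A_1$. The main care here is characteristic $2$. The forms $q_\alpha$ are of the shape $\sum z_{2i-1}(\ldots)$ with an even number of variables, which is a hyperbolic-type pairing, so I would argue as in \cite[Prop.~3.8]{BFMS25}: change coordinates successively to reach $x_1x_2 + \cdots$ in each block. This step is the main obstacle.

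\textbf{Case $\delta = 0$.} Here $h = \prod_\alpha Q_\alpha$. By Lemma~\ref{Lem:ResultsContinuants}\eqref{Lem:ResultsContinuants_singu} with $\lambda = 0$, each $V(Q_\alpha)$ is regular, since $0 \neq \pm 1$. I would show more, namely that $Q_\alpha$ can serve as a global coordinate. Note that $\partial Q_\alpha/\partial z_{\len_\alpha}^{(\alpha)} = P_{\len_\alpha - 1}(z_1^{(\alpha)},\ldots,z_{\len_\alpha-1}^{(\alpha)})$, and by Lemma~\ref{Lem:ResultsContinuants}\eqref{Lem:ResultsContinuants_consecutive} this does not vanish on $V(Q_\alpha)$. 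So locally along $V(Q_\alpha)$ we may take $x_\alpha := Q_\alpha$ as a coordinate replacing $z_{\len_\alpha}^{(\alpha)}$.

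Because the $x_\alpha$ involve disjoint sets of variables, $(x_1,\ldots,x_m)$ together with the remaining $z$'s form a regular system of parameters at every point of $\mathfrak X$. Hence $h = x_1\cdots x_m$ is simple normal crossing. Irreducibility of each $V(Q_\alpha)$ follows from the fact that $Q_\alpha$ is irreducible: it has degree $1$ in $z_{\len_\alpha}^{(\alpha)}$, and its coefficient $P_{\len_\alpha-1}$ and constant term $P_{\len_\alpha-2}$ are coprime by Lemma~\ref{Lem:ResultsContinuants}\eqref{Lem:ResultsContinuants_consecutive}. Finally, the statement's ``disjoint'' should be read as components meeting transversally, as the normal form $x_1\cdots x_m$ indicates.
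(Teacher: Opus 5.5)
Your proof is correct and has the same skeleton as the paper's: the Jacobian criterion, the observation that for $\delta\neq 0$ every $Q_\beta$ is a unit along $\mathfrak{X}$, and then Lemma~\ref{Lem:ResultsContinuants}. Two steps, however, are done differently.

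To locate the singular points, you apply Lemma~\ref{Lem:ResultsContinuants}\eqref{Lem:ResultsContinuants_singu} to the level set $V(Q_\alpha-Q_\alpha(p))$ through the point. This settles odd and even $\len_\alpha$ at once. The paper instead treats the two cases separately. For some odd $\len_\beta$ it takes $P_{\len_\beta}$ as a coordinate, citing regularity of $V(P_{\len_\beta})$; but along $\mathfrak{X}$ only nonzero levels of $P_{\len_\beta}$ occur, which is precisely what your version handles. For all $\len_\alpha$ even it defers to the argument of \cite[Lemma~3.7]{BFMS23} to force all $z_i^{(\alpha)}=0$.

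For the $A_1$ type, the paper needs no Morse lemma. By (b) and a telescoping expansion, $h=\sum_\alpha\epsilon_\alpha\bigl(Q_\alpha-(-1)^{\halflen_\alpha}\bigr)$, where $\epsilon_\alpha=\prod_{\gamma<\alpha}(-1)^{\halflen_\gamma}\prod_{\beta>\alpha}Q_\beta$ is a unit at the origin. Insert the block normal forms $Q_\alpha-(-1)^{\halflen_\alpha}=\sum_i t^{(\alpha)}_{2i-1}t^{(\alpha)}_{2i}$ from Lemma~\ref{Lem:ResultsContinuants}\eqref{Lem:ResultsContinuants_singu} and absorb $\epsilon_\alpha$ into the $t^{(\alpha)}_{2i-1}$. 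This gives the normal form in every characteristic, so the obstacle you flag never arises.

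Your route also goes through. The quadratic part is in fact hyperbolic: in the variables of block $\alpha$, $q_\alpha=\sum_{l=1}^{\halflen_\alpha}(z_1+z_3+\cdots+z_{2l-1})\,z_{2l}$. A germ whose Jacobian ideal is the maximal ideal is $2$-determined in any characteristic. Note that over a non-closed field you need hyperbolicity, not mere nondegeneracy.

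For $\delta=0$ you give more than the paper's one line, including irreducibility. You are also right that ``disjoint'' has to be read as ``meeting transversally''. One precision to add: at a point $p$, only the $Q_\alpha$ with $Q_\alpha(p)=0$ serve as regular parameters. The others are units there, possibly with vanishing gradient (e.g.\ at the origin of an even block), and this still yields normal crossings.
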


\begin{proof}
	First, suppose $ \delta $ is invertible. 
	If there exists an $ \beta \in \{ 1, \ldots, m \} $ such that $ \len_\beta $ is odd, 
	then the corresponding $ V ( P_{\len_\beta} (z_1^{(\beta)}, \ldots, z_{\len_\beta}^{(\beta)}) ) $ is regular by Lemma~\ref{Lem:ResultsContinuants}\eqref{Lem:ResultsContinuants_singu}.  
	Hence, we may introduce $ x_\beta :=  P_{\len_\beta} (z_1^{(\beta)}, \ldots, z_{\len_\beta}^{(\beta)}) $ as new variable.
	Clearly, $ h $ and $ \dfrac{\partial h}{\partial x_\beta} $ cannot vanish both at the same time since $ \delta $ is invertible. 
	In other words, $ V (h) $ is regular in this case. 
	\\
	Assume that $ \len_\alpha = 2 \halflen_\alpha $ are even for all $ \alpha \in \{ 1, \ldots, m \} $. 
	Notice that the vanishing of the partial derivatives of $ h $
	with respect to $ z_i^{(\alpha)} $ is equivalent to 
	\[
	\frac{\partial P_{\len_\alpha} (z_1^{(\alpha)}, \ldots, z_{\len_\alpha}^{(\alpha)})}{\partial z_i^{(\alpha)}} = 0
	\ , 
	\ \ \
	\mbox{ for all }
	\alpha \in \{1, \ldots, m \}
	\mbox{ and } i \in \{ 1 , \ldots, \len_\alpha \}
	\ . 
	\] 
	Analogous to the proof of \cite[Lemma~3.7]{BFMS23}, it follows that for a singular point, one has to have 
	$ z_i^{(\alpha)} = 0 $ for all $ i \in \{ 1, \ldots, \len_\alpha \} $ and all $ \alpha \in \{ 1, \ldots, m \} $.  
	Using this in $ h = 0 $, we get that additionally, $ (-1)^{\halflen_1 + \cdots + \halflen_m} + \delta  = 0 $ has to hold. 
	Hence, we achieved the statement on the singular locus if $ \delta $ is invertible. 
	(Since $ h \in \ZZ[z_1^{(\alpha)}, \ldots, z_1^{(\alpha)} \mid \alpha \in \{1, \ldots, m \}] $
	if $ \delta = (-1)^{\halflen_1 + \cdots + \halflen_m+1} $,
	derivatives with respect to a $ p $-basis of $ \kappa $ have no impact when determining the singular locus over non-perfect fields using Zariski's regularity criterion \cite[Theorem~11, p.~39]{Zar47}.)
	\\
	Let us classify the singularity in the singular case. 
	We introduce $ Q_{\len_\alpha} (z_1^{(\alpha)}, \ldots, z_{\len_\alpha}^{(\alpha)}) := P_{\len_\alpha} ( z_1^{(\alpha)}, \ldots, z_{\len_\alpha}^{(\alpha)} )  - (-1)^{\halflen_\alpha} $.
	Then, we get
	\[
	\begin{array}{c} 
		\displaystyle 
		h = 
		\prod_{\alpha=1}^m \Big( Q_{\len_\alpha} (z_1^{(\alpha)}, \ldots, z_{\len_\alpha}^{(\alpha)}) + (-1)^{\halflen_\alpha} \Big) + \delta
		=
		\sum_{\alpha=1}^m \epsilon_\alpha  Q_{\len_\alpha} (z_1^{(\alpha)}, \ldots, z_{\len_\alpha}^{(\alpha)})
		\ , 
		\\[15pt]
		\displaystyle 
		\mbox{where } \ 
		\epsilon_\alpha := 
		\prod_{\gamma=1}^{\alpha-1} (-1)^{\halflen_\gamma}
				\prod_{\beta = \alpha+1}^m \Big( Q_{\len_\beta} (z_1^{(\beta)}, \ldots, z_{\len_\beta}^{(\beta)}) + (-1)^{\halflen_\beta} \Big)
		\ . 
	\end{array}
	\]
	Note that, locally at the origin, all $ \epsilon_\alpha $ are units.
	Since each $ Q_{\len_\alpha} (z_1^{(\alpha)}, \ldots, z_{\len_\alpha}^{(\alpha)}) $ corresponds to a hypersurface singularity of type $ A_1 $ in $ \AA_\kappa^{\len_\alpha} $ (Lemma~\ref{Lem:ResultsContinuants}\eqref{Lem:ResultsContinuants_singu}),
	it follows that $ \mathfrak{X} $ has a singularity of type $ A_1 $ at the origin.  
	  
	 Finally, if $ \delta = 0 $,
	 Lemma~\ref{Lem:ResultsContinuants}\eqref{Lem:ResultsContinuants_singu} provides that each $ V (P_{\len_\alpha} ( z_1^{(\alpha)}, \ldots, z_{\len_\alpha}^{(\alpha)} ) ) $ is regular. 
	 This implies the missing part of the assertion. 
\end{proof}

\begin{proof}[\bf \em Proof of Thm.~\ref{Thm:Sing}]
	Recall that the polynomials $ F_\alpha, G \in \kappa[\z, u, w ] $ of the presentation deduced in Prop.~\ref{Prop:NewPresentation} are  
	\[
		\begin{array}{ll}
			F_\alpha  = P_{\len_\alpha +1} ( z_1^{(\alpha)}, \ldots, z_{\len_\alpha + 1}^{(\alpha)} ) 
			- \lambda_\alpha u
			\ ,
			&
			\mbox{ for } \alpha \in \{ 1, \ldots, m \} 
			\ ,
			\\[3pt]
			
			 G = \displaystyle 
			u w - 
			\prod_{\alpha=1}^m P_{\len_\alpha} (z_1^{(\alpha)}, \ldots, z_{\len_\alpha}^{(\alpha)}) - \mu
			\ .  
		\end{array} 
	\]
	First, observe that the singular locus relative to $ \kappa $,
	which we denote by $ \Sing_\kappa (X) $, 
	is determined by the maximal minor of the Jacobian matrix 
	\[ 
		J : =\Jac(F_1, \ldots, F_m, G; \z , u, w )
		\ .
	\] 	
	Analogous to the proof of Prop.~\ref{Prop:ProdContSing} we will see that derivatives with respect to constants do not contribute to the singular locus
	when determining the singular locus over non-perfect fields using Zariski's regularity criterion \cite[Theorem~11, p.~39]{Zar47}. 
	\\
	Recall that the singularity is given by $ m + 1 $ equations so maximal minors are determined by choosing $ m + 1 $ variables.
	Using Lemma~\ref{Lem:ResultsContinuants}\eqref{Lem:ResultsContinuants_deriv},
	the maximal minors corresponding to the derivatives by $ (z_{\len_\alpha+1}^{(\alpha)}, u, w  \mid \alpha \in \{1, \ldots, m \} \setminus \{\beta\}) $, for some fixed $ \beta $,
	and with respect to $ (z_{\len_\alpha+1}^{(\alpha)}, u  \mid \alpha \in \{1, \ldots, m \}) $
	provide the equations
	\begin{equation}
		\label{eq:sing_1}
		u \cdot \frac{\displaystyle \prod_{i=1}^m P_{\len_\alpha} (z_1^{(\alpha)}, \ldots, z_{\len_\alpha}^{(\alpha)})}{P_{\len_\beta} (z_1^{(\beta)}, \ldots, z_{\len_\beta}^{(\beta)})}
		=
		w \prod_{i=1}^m P_{\len_\alpha} (z_1^{(\alpha)}, \ldots, z_{\len_\alpha}^{(\alpha)})
		= 0 
	\end{equation}
	for the singular locus, where $ \beta \in \{ 1, \ldots, m \} $.
	
	First, assume $ u = w = 0 $. 
	Then the vanishing of $ F_\alpha $ and $ G $ is equivalent to  
	\[
		P_{\len_\alpha +1} ( z_1^{(\alpha)}, \ldots, z_{\len_\alpha + 1}^{(\alpha)} )  
		= \displaystyle 
		\prod_{\alpha=1}^m P_{\len_\alpha} ( z_1^{(\alpha)}, \ldots, z_{\len_\alpha}^{(\alpha)} )  + \mu  = 0 
		\ . 
	\]
	In particular, $ \dfrac{\partial F_\alpha}{\partial z_{\len_\alpha+1}^{(\alpha)}} = P_{\len_\alpha} ( z_1^{(\alpha)}, \ldots, z_{\len_\alpha}^{(\alpha)} )  $ cannot vanish for $ \alpha \in \{ 1, \ldots, m \} $
	by Lemma~\ref{Lem:ResultsContinuants}\eqref{Lem:ResultsContinuants_consecutive}.
	Hence, the maximal minors with respect to 
	$ (z_{\len_1+1}^{(1)},z_{\len_2+1}^{(2)}, \ldots, z_{\len_m+1}^{(m)}, *) $,
	where $* $ is one of the remaining variables, but not $ u $ or $ w $,
	lead to conditions
	$ \dfrac{\partial G}{\partial *} = 0 $ 
	for singularities to arise.
	These equations can be rephrases as 
	\[ 
		\frac{\partial}{\partial *} (\displaystyle 
		\prod_{\alpha=1}^m P_{\len_\alpha} ( z_1^{(\alpha)}, \ldots, z_{\len_\alpha}^{(\alpha)} )  + \mu)
		= 0 
		\ . 
	\] 
	We deduced that 
	all entries of the row of Jacobian matrix $ J $ corresponding to the derivatives of $ G $ 
	vanish.
	This means that all remaining minors are automatically zero. 
	Therefore 
	\[
		D := 
		V(u,w, P_{\len_\alpha +1} ( z_1^{(\alpha)}, \ldots, z_{\len_\alpha + 1}^{(\alpha)} ) 
		\mid \alpha \in \{ 1, \ldots, m \})
		\cap 
		\Sing (\prod_{\alpha=1}^m P_{\len_\alpha} ( z_1^{(\alpha)}, \ldots, z_{\len_\alpha}^{(\alpha)} ) + \mu)
	\]
	is contained in the singular locus relative to $ \kappa $.
	Prop.~\ref{Prop:ProdContSing} provides that $ D $ is non-empty if and only if 
	$ \len_\alpha = 2 \halflen_\alpha $ is even, for all $ \alpha \in \{ 1, \ldots, m \} $,
	and $ (-1)^{\halflen_1 + \cdots + \halflen_m} + \mu  = 0 $.
	If  $ D \neq \varnothing $, then $ D = V(\z, u,w) $ is the origin by
	Prop.~\ref{Prop:ProdContSing} and  Lemma~\ref{Lem:ResultsContinuants}\eqref{Lem:ResultsContinuants_Pn(0)}.
	Observe that derivatives with respect to a $ p $-basis of $ \kappa $  have no impact on the component $ D $ so that $ D $ is a component of $ \Sing (\XX) $. 
		
	Next, \eqref{eq:sing_1} provides the equations  
	$ u = P_{\len_\alpha} (z_1^{(\alpha)}, \ldots, z_{\len_\alpha}^{(\alpha)}) = 0 $  
	for a possible singularity. 
	Since $ u = F_\alpha = 0 $,
	it is required to have
	$ P_{\len_\alpha +1} ( z_1^{(\alpha)}, \ldots, z_{\len_\alpha + 1}^{(\alpha)} ) = 0 $
	for a singularity,
	which is impossible 
	by Lemma~\ref{Lem:ResultsContinuants}\eqref{Lem:ResultsContinuants_consecutive}.
	Hence, no singularities are detected in this case.

	The last case emerging from \eqref{eq:sing_1} is that 
	\[
		P_{\len_\alpha} (z_1^{(\alpha)}, \ldots, z_{\len_\alpha}^{(\alpha)})
		=
		P_{\len_\beta} (z_1^{(\beta)},  \ldots, z_{\len_\beta}^{(\beta)})
		=
		0
		\ ,
	\]
	for two fixed $ \alpha, \beta \in \{ 1, \ldots, m \} $ with $ \alpha \neq \beta $. 
 	Using this in $ G = 0 $, we obtain that $ uw - \mu = 0 $.
	Further, all entries in the row of $ J $ corresponding to $ G $ are zero except for the derivatives by $ u $ and $ w $. 
	Notice that the column corresponding to the derivatives with respect to $ u $ consists only of invertible elements.
	Furthermore,
	the column for the derivatives by $ w $
	is of the form $ (0, \ldots, 0 , u)^T $.
	This implies that it is sufficient to consider the maximal minors arising from
	derivatives by $ (u,w,*^{(\sigma)} \mid \sigma \in \{ 1, \ldots , m \} \setminus \{ \tau \}) $, for $ \tau \in \{ 1, \ldots, m \} $
	and where $ *^{(\sigma)} $ denotes any of the $ z $-variables with superscript $ ( \sigma ) $.
	This provides the additional following conditions for a singularity:
	\begin{equation}
		\label{eq:pf_uw*}
		\prod_{\substack{\sigma=1\\\sigma \neq \tau}}^m
		\frac{\partial P_{\len_\sigma +1} ( z_1^{(\sigma)}, \ldots, z_{\len_\sigma + 1}^{(\sigma)} ) }{\partial z_{i(\sigma)}^{(\sigma)}}
		= 0
		\ 
		,
		\hspace{0.5cm} 
		\begin{array}{ll}
			\mbox{for} &
			\tau \in \{ 1, \ldots, m \} 
			\mbox{ and}
			\\
			&
			i(\sigma) \in \{ 1, \ldots \len_{\sigma} + 1 \} \ . 
		\end{array}
	\end{equation}
	From this, we obtain the components $ C_{\alpha,\beta, \sigma, \tau} \subseteq \Sing_\kappa (X) $ of the form
	\[
		C_{\alpha,\beta, \sigma, \tau} := 
		\Sing_\kappa (F_\sigma) 
		\cap 
		\Sing_\kappa (F_\tau) 
		\cap V ( u w  - \mu
		,
		P_{\len_\alpha} (z_1^{(\alpha)}, \ldots, z_{\len_\alpha}^{(\alpha)})
		,
		P_{\len_\beta} (z_1^{(\beta)},  \ldots, z_{\len_\beta}^{(\beta)}))
		\cap 
		\XX \ .
	\] 
	Since $ C_{\alpha,\beta, \sigma, \tau} \subset C_{\sigma, \tau, \sigma, \tau} $, 
	it suffices to consider the component $ C_{\alpha,\beta}  := C_{\alpha,\beta, \alpha,\beta} $ for the chosen $ \alpha, \beta $. 
	Using 
	$ 
	\lambda_\beta F_\alpha - \lambda_\alpha F_\beta
	=
	\lambda_\beta P_{\len_\alpha +1} ( z_1^{(\alpha)}, \ldots, z_{\len_\alpha + 1}^{(\alpha)} ) - \lambda_\alpha P_{\len_\beta +1} ( z_1^{(\beta)}, \ldots, z_{\len_\beta + 1}^{(\beta)} )
	$,
	we see that 
	\[
	C_{\alpha,\beta} = 
	\Sing_\kappa (\lambda_\beta F_\alpha - \lambda_\alpha F_\beta )
	\cap V ( u w  - \mu)
	\cap 
	\XX \ .
	\] 
	Taking into account Lemma~\ref{Lem:ResultsContinuants}\eqref{Lem:ResultsContinuants_singu}, 
	it follows that 
	$ V (\lambda_\beta F_\alpha - \lambda_\alpha F_\beta) \subseteq \AA_\kappa^{\len_\alpha + \len_\beta + 2} $ 
	is singular if and only if $ \alpha, \beta $ are compatible
	(i.e., 
	$ \len_\alpha + 1 = 2 \halflen_\alpha $, $ \len_\beta + 1 = 2 \halflen_\beta $,
	and
	$ \lambda_\beta (-1)^{\halflen_\alpha} - \lambda_\alpha (-1)^{\halflen_\beta} = 0 $,
	see Def.~\ref{Def:partition}\eqref{item:compatible}). 
	In the singular case, $ V (\lambda_\beta F_\alpha - \lambda_\alpha F_\beta) \subseteq \AA_\kappa^{\len_\alpha + \len_\beta + 2} $ has an isolated singularity of type $ A_1 $ at the origin $ V (\z^{(\alpha)}, \z^{(\beta)}) $.
	\\
	On $ V (\z^{(\alpha)}, \z^{(\beta)}) $, the vanishing
	$ F_\alpha = 0 $ is equivalent to $ (-1)^{\halflen_\alpha}  - \lambda_\alpha u = 0 $.
	In conclusion, we get if $ C_{\alpha, \beta} $ is non-empty 
	then
	$ 
		C_{\alpha, \beta} = 
		V (
		\z^{(\alpha)}, 
		\z^{(\beta)}, 
		u - \xi_u, 
		w - \xi_w )
		\cap 
		\XX  	
	$, 
	for $ \xi_u := (-1)^{\halflen_\alpha}  \lambda_\alpha^{-1} $,
	$ \xi_w := (-1)^{\halflen_\alpha}\lambda_\alpha\mu $.
Notice that the part $ \cap \XX $ in $ C_{\alpha,\beta} $ could be replaced by 
	$ \cap 
	\bigcap_{\gamma \neq \alpha,\beta}
	V ( P_{\len_\gamma +1} ( z_1^{(\gamma)}, \ldots, z_{\len_\gamma + 1}^{(\gamma)} ) 
	- \xi_\gamma  ) $,
	for
	$ \xi_\gamma := (-1)^{\halflen_\alpha} \lambda_\gamma \lambda_\alpha^{-1}  $.

	For $ \kappa $ perfect, this completes the proof of Thm.~\ref{Thm:Sing}.
	Suppose $ \kappa $ is not perfect.
	We want to argue that $ C_{\alpha,\beta} \subseteq \Sing(X) $ if it is non-empty.  
	Let $ \mathcal{B}_\kappa $ be any $ p $-basis of $ \kappa $.
	On $ C_{\alpha,\beta} $, 
	the two rows corresponding to $ F_\alpha $ and $ F_\beta $ of the Jacobian matrix with respect to $ (\z, u, w, \mathcal{B}_\kappa) $
	are linear dependent (over $ \ZZ $) 
	because all non-zero entries are determined by derivatives of $ \lambda_\alpha u $ respectively $ \lambda_\beta u = (-1)^{\kappa_\beta - \kappa_\alpha} \lambda_\alpha u $.
	Therefore, all maximal minors vanish on $ C_{\alpha,\beta} $
	and so, $ C_{\alpha,\beta} $ is a component of the singular locus of $ X $, as claimed. 
\end{proof}

For the classification of the singularities, we require the following result.

\begin{lemma}
	\label{Lem:intersec_of_An}
	Let $ n \in \ZZ_{\geq 2} $ be a positive integer with $ n \geq 2 $. 
	For $ a \in \{ 1, \ldots, n \} $, let
	\[
		g_a (\t^{(a)}) := \sum_{i=1}^{q_a} t_{2i-1}^{(a)} t_{2i}^{(a)} \in \kappa [\t^{(a)}] = \kappa [t_1^{(a)}, \ldots, t_{2q_a}^{(a)}]
	\]
	be the polynomial defining an $ A_1 $-hypersurface singularity in $ \Spec (\kappa [\t^{(a)}]) \cong \AA_\kappa^{2q_a} $. 
	Let 
	\[
		\mathcal{X} :=  \mathcal{X}^{(n)} := V ( g_a (\t^{(a)}) - g_b (\t^{(b)}) \mid a, b \in \{ 1, \ldots, n\} )
		\subset \AA_\kappa^{2(q_1+ \cdots + q_n)} =: \mathcal{Z} 
		\ .
	\] 
	Define $ \mathcal{Y}_0 \subset  \mathcal{Y}_1 \subset  \cdots \subset \mathcal{Y}_{n-2} \subset \mathcal{X} $
	by
	\[
		\mathcal{Y}_j 
		\ := \
		\mathcal{Y}_j^{(n)} 
		\ := 
		\bigcup_{(a_1, \ldots, a_{n-j})}
		\Big( V(\t^{(a_1)}, \ldots, \t^{(a_{n-j})}) \cap \mathcal{X} \Big)  
		\ , 
	\]
	where the union ranges over all $ (n-j) $-tuples $ (a_1, \ldots, a_{n-j}) $
	with $ a_1 < a_2 < \cdots < a_{n-j} $. 
	\\
	The following sequence of blow-ups is an embedded resolution of singularities for $ \mathcal{X} $:
	\[
		\mathcal{Z}_{n-1} \stackrel{\pi_{n-2}}{\longrightarrow}
		\cdots 
		\stackrel{\pi_2}{\longrightarrow}
		\mathcal{Z}_2 \stackrel{\pi_1}{\longrightarrow}
		\mathcal{Z}_1 \stackrel{\pi_0}{\longrightarrow}
		\mathcal{Z}
		\ , 
	\]
	where $ \pi_0 $ is the blow-up with center $ \mathcal{Y}_0 $
	and $ \pi_j \colon \mathcal{Z}_{j+1} \to \mathcal{Z}_{j} $ is the blow-up with center the strict transform of $ \mathcal{Y}_j $ in $ \mathcal{Z}_{j} $, for $ j \in \{ 1, \ldots, n-2 \} $.
\end{lemma}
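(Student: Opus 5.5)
The plan is to argue by induction on $ n $. We work chart by chart on the first blow-up $ \pi_0 $ and show that, in every chart, the strict transform of $ \mathcal{X}^{(n)} $ is a cylinder over some $ \mathcal{X}^{(n-1)} $. Moreover, the strict transforms of the remaining centers $ \mathcal{Y}_1, \ldots, \mathcal{Y}_{n-2} $ should become the centers $ \mathcal{Y}_0^{(n-1)}, \ldots, \mathcal{Y}_{n-3}^{(n-1)} $ of that smaller configuration, up to the cylinder factor.

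First, I would record the singular locus. The rows of the Jacobian of $ (g_1 - g_b)_{b \geq 2} $ are built from the gradients $ \nabla g_a $. Since $ \nabla g_a = 0 $ if and only if $ \t^{(a)} = 0 $, a point of $ \mathcal{X} $ is singular exactly when at least two of the blocks $ \t^{(a)} $ vanish. (The $ g_a $ have coefficients in $ \ZZ $, so a $ p $-basis plays no role, as in the proof of Thm.~\ref{Thm:Sing}.) Hence $ \Sing(\mathcal{X}) = \mathcal{Y}_{n-2} $. This also shows that each $ \mathcal{Y}_j $ is a union of cylinders over lower-dimensional configurations of the same kind.

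The base case is $ n = 2 $. Here $ \mathcal{X}^{(2)} = V(g_1 - g_2) $ is an $ A_1 $-hypersurface singularity with isolated singular point $ \mathcal{Y}_0 $. One blow-up of the origin gives a smooth strict transform, meeting the exceptional divisor transversally in a smooth quadric; this is the standard computation.

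For the inductive step, consider a chart of the blow-up of $ \mathcal{Y}_0 $, the origin. By symmetry we may take the chart $ t_1^{(1)} = s $, with all other coordinates written as $ s $ times new coordinates $ t' $, and set $ t_1'^{(1)} = 1 $. Then $ g_a = s^2 g_a' $, and the strict transform is $ V(g_1' - g_b' \mid b) $. Here $ g_1' = t_2'^{(1)} + \sum_{i \geq 2} t_{2i-1}'^{(1)} t_{2i}'^{(1)} $ has a linear term. Solving $ g_1' = g_2' $ for $ t_2'^{(1)} $ eliminates the first block. What remains is $ V(g_b' - g_c' \mid b, c \geq 2) \cong \mathcal{X}^{(n-1)} \times \AA^{N} $, where the free coordinates include $ s $ and $ t_3'^{(1)}, \ldots, t_{2q_1}'^{(1)} $. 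The exceptional divisor is $ V(s) $, and the cylinder structure makes it transversal to all later objects.

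Next I would track the centers. Take a component $ V(\t^{(a_1)}, \ldots, \t^{(a_{n-j})}) $ of $ \mathcal{Y}_j $. If $ 1 $ is among the $ a_i $, its strict transform misses this chart, because $ t_1'^{(1)} = 1 $ there. Otherwise its strict transform is $ V(\t'^{(a_1)}, \ldots, \t'^{(a_{n-j})}) $ with all $ a_i \geq 2 $, which is a component of $ \mathcal{Y}_{j-1}^{(n-1)} \times \AA^N $. Conversely, every component of $ \mathcal{Y}_{j-1}^{(n-1)} $ arises this way. The chart $ t_2^{(a)} = s $ is handled the same way, and the charts with leading variable in another block $ a $ follow by relabeling. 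So in each chart, the sequence $ \pi_1, \ldots, \pi_{n-2} $ is the sequence of Lemma~\ref{Lem:intersec_of_An} for $ \mathcal{X}^{(n-1)} $, times $ \AA^N $, and the induction hypothesis applies.

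The main obstacle is the bookkeeping needed to make this an honest embedded resolution. The strict transforms of distinct components of $ \mathcal{Y}_j $ must be disjoint, or at least must meet only along loci already blown up, so that blowing up their union is the same as blowing them up one at a time. I expect this to follow from the chart description. Two components indexed by different $ (n-j) $-tuples intersect inside a component of $ \mathcal{Y}_{j-1} $, which has already been blown up. In each chart of that blow-up, the two strict transforms then require two different blocks to be nonzero in incompatible ways, so they separate. I would verify this carefully at the first step and let the induction carry it through. At the end, I would check that the strict transform of $ \mathcal{X} $ is regular and has simple normal crossings with the exceptional divisors, using the cylinder structure: the exceptional divisors are coordinate hyperplanes $ s = 0 $ in the free factor.
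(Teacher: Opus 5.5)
Your proposal is correct and follows essentially the same route as the paper: induction on $n$, blowing up the origin, using that in a chart whose exceptional variable comes from one block that block's quadric acquires a linear term and can be eliminated, so the strict transform becomes $\mathcal{X}^{(n-1)}\times\AA_\kappa^{2q_1-1}$ and the strict transform of $\mathcal{Y}_j^{(n)}$ becomes $\mathcal{Y}_{j-1}^{(n-1)}\times\AA_\kappa^{2q_1-1}$. Your extra bookkeeping (disjointness and regularity of the components of the later centers, transversality to $V(s)$) goes beyond what the paper writes and is handled by the same chart-wise induction, since only one component of each center is visible in each iterated chart; the only inaccuracy is your unused aside that the components of $\mathcal{Y}_j$ are cylinders over smaller configurations, when in fact they are products of the cones $V(g_b)$ with $b\notin\{a_1,\dots,a_{n-j}\}$.
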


In the following, we call a singularity of the type described in Lemma~\ref{Lem:intersec_of_An} an 
{\em intersection of compatible $ A_1 $-hypersurface singularities centered at $ V (\t^{(a)} \mid a \in \{ 1, \ldots, n \}) $}.
The reader may wonder why the adjective ``compatible" is used, we address this in Example~\ref{Ex:not-compatible-A1}.

\begin{proof}[Proof of Lemma~\ref{Lem:intersec_of_An}]
	First observe that $ \mathcal{Y}_0 $ is the origin
	and for $ j \in \{ 1, \ldots, n-2 \} $,
	\[  
		\mathcal{Y}_j 
		= 
		\bigcup_{(a_1, \ldots, a_{n-j})}
		\Big( 
		V(\t^{(a_1)}, \ldots, \t^{(a_{n-j})}) \cap V(g_b(\t^{(b)}) \mid b \in \{1,\ldots, m \} \setminus \{a_1, \ldots, a_{n-j}\})
		\Big)  
		\ .
	\]  
	We prove the assertion by induction on $ n $.
	For $ n = 2 $, $ \mathcal{X} $ is an $ A_1 $-hypersurface singularity. 
	Hence, blowing up the origin $ \mathcal{Y}_0 $ resolves the singularities, as desired. 
	
	Assume $ n > 2 $.
	Perform the blow-up $ \pi_0 $ with center $ \mathcal{Y}_0^{(n)} $.
	Without loss of generality, we only consider the chart where the exceptional divisors is determined by $ t_1^{(n)} $ --- all other charts are analogous. 
	The coordinates transform as 
	$ t_1^{(n)} = {t_1^{(n)}}' $ and 
	$ t^{(a)}_j = {t_1^{(n)}}' {t^{(a)}_j}' $ for $ a \in \{ 1, \ldots, n \} $ and $ j \in \{ 1, \ldots, 2q_a \} $ with $ (a,j) \neq (n,1) $.   
	The exceptional divisor of the blow-up is locally given as $ V ({t_1^{(n)}}') $.
	We denote by $ (.)' $ the strict transform of $ (.) $ in the given chart.  
	Notice that $ (\mathcal{X}^{(n)})' $ is determined by the 
	strict transforms of the system of generators $ ( g_a (\t^{(a)}) - g_b (\t^{(b)}) \mid a, b ) $.
	Since $ g_n (\t^{(n)}) $ defines an $ A_1 $-hypersurface singularity, 
	it is resolved in the given charts. 
	On the other hand, for $ a \neq n $, we get
	$ (g_a (\t^{(a)}))'= g_a ({\t^{(a)}}')$. 
	Hence, if we choose $ a \in \{ 1, \ldots, n \} $, $ a \neq n $, then $ V( (g_n (\t^{(n)}))' - g_a ({\t^{(a)}}')) $ is regular
	and if we substitute  $ (g_n (\t^{(n)}))' = g_a ({\t^{(a)}}') $ in the remaining generators, 
	we obtain
	\[ 
		(\mathcal{X}^{(n)})' \cong V ( g_a ({\t^{(a)}}') - g_b ({\t^{(b)}}') \mid a, b \in \{ 1, \ldots, n-1\} ) \times_{\Spec(\kappa) } \AA_\kappa^{2q_n-1}
		\ . 
	\]
	In other words, 
	$ (\mathcal{X}^{(n)})' \cong \mathcal{X}^{(n-1)} \times_{\Spec(\kappa)} \AA_\kappa^{2q_n-1}  \subset \AA_\kappa^{2(q_1+ \cdots + q_n) - 1} $.
	Furthermore, along this isomorphism, we have 
	$ ( \mathcal{Y}_j^{(n)} )' \cong \mathcal{Y}_{j-1}^{(n-1)}\times_{\Spec(\kappa)} \AA_\kappa^{2q_n-1} $ for $ j \in \{ 2, \ldots, n -2 \} $. 
	Thus, we can apply induction and the statement follows. 
\end{proof}

\begin{example}
	\label{Ex:not-compatible-A1}
	Let $ f,g \in \kappa[t_1, t_2, t_3, t_4, t_5, t_6] $ be given by 
	\[ 
	f = t_1 t_2 + t_3 t_4 
	\qquad 
	\mbox{ and } 
	\qquad 
	g = t_5 t_6 + t_2 (t_3 + t_4^3 ) 
	\ .
	\] 
	Observe that $ V(f) $ is an $ A_1 $-hypersurface singularity.
	If we introduce $ s_3 := t_3 + t_4^2 $, then $ g = t_5 t_6 + t_2 s_3 $,
	so $ V (g) $ is also an $ A_1 $-hypersurface singularity.
	On the other hand, $ V (f,g) $ is not an intersection of compatible $ A_1 $-hypersurface since there are no coordinates such that 
	$ f $ and $ g $ are both of the form $ \sum_{i=1}^2 w_{2i-1} w_{2i} $.
	\\  
	This can also be seen by blowing up the origin $ V (t_1, \ldots, t_6 ) $.
	Consider the chart where $ t_i = t_i' t_4' $ for $ i \in \{ 1 , \ldots, 6 \} \setminus \{4\} $ and $ t_4 = t_4' $.
	The strict transform of $ f $ is $ f' = t_1' t_2' + t_3' $
	and the one of $ g $ is $ g' = t_5' t_6' + t_2' (t_3' + t_4'^2) $.
	Introduce the new variable $ s := t_3' + t_1' t_2' $ and substitute $ t_3' = s - t_1' t_2' $.
	We get $ f' = s $ 
	and $ g' = t_5' t_6' + t_2' (s -t_1't_2' + t_4'^2) $. 
	Hence, $ V (f',g') = V(s,\, t_5' t_6' + t_2' (t_4'^2 -t_1't_2')) $
	which is not isomorphic to an $ A_1 $-hypersurface singularity.  
\end{example}

In the next step, we classify the singularities of $ \XX $ 
which we described in Thm.~\ref{Thm:Sing}.
Clearly, a component 
$ C_{\alpha,\beta} $ may be singular itself 
since $ V ( P_{\len_\gamma +1} ( z_1^{(\gamma)}, \ldots, z_{\len_\gamma + 1}^{(\gamma)} ) 
- \xi_\gamma  ) $ can be singular, cf.~Lemma~\ref{Lem:ResultsContinuants}\eqref{Lem:ResultsContinuants_singu}.

\begin{Thm}[Classification]
	\label{Thm:ClassSing} 
	Using the notation of Thm.~\ref{Thm:Sing}, we have \
	\begin{enumerate}[(1)]
		\item 
		\label{it:origin}
		If $ \ell_\alpha = 2 k_\alpha $ for all $ \alpha \in \{ 1, \ldots, m \} $
		and $ (-1)^{k_1 + \cdots + k_m} + \mu = 0 $,
		then $ \XX $ is locally at the origin $ D $ isomorphic to a hypersurface singularity of type $ A_1 $. 
		
		\item 
		\label{it:c_i}
		If $ C_{\alpha_i} = \bigcup C_{\alpha,\beta} $ is contained in $ \Sing(X) $,
		then $ X $ is locally at $ C_{\alpha_i} $ isomorphic to a cylinder over an intersection of compatible $ A_1 $-hypersurface singularities centered at $ V ( \z^{(\beta)} \mid \beta \in I_{\alpha_i} ) $,
		where $ C_{\alpha_i} $ corresponds to $ \mathcal{Y}_{n-2} $ in the notion of Lemma~\ref{Lem:intersec_of_An} for $ n := |I_{\alpha_i}| $.
	\end{enumerate}
\end{Thm}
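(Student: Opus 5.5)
Part \eqref{it:origin} and part \eqref{it:c_i} both follow from the presentation of Prop.~\ref{Prop:NewPresentation}: in each case we eliminate $u$ and $w$ and reduce to a statement about continuant polynomials, and then apply Lemma~\ref{Lem:ResultsContinuants} and Prop.~\ref{Prop:ProdContSing}.

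\textbf{Part \eqref{it:origin}.} At the origin $D$ we have $u=0$, and each $P_{\len_\alpha}(z_1^{(\alpha)},\dots,z_{\len_\alpha}^{(\alpha)})$ with $\len_\alpha$ even equals $(-1)^{\halflen_\alpha}\neq 0$ by Lemma~\ref{Lem:ResultsContinuants}\eqref{Lem:ResultsContinuants_Pn(0)}.
\begin{enumerate}[(i)]
\item Since $\partial F_\alpha/\partial z_{\len_\alpha+1}^{(\alpha)}=P_{\len_\alpha}$ is a unit locally at $D$, the implicit function theorem (in the completion) lets us use $F_\alpha=0$ to eliminate $z_{\len_\alpha+1}^{(\alpha)}$ as a power series in the remaining $z^{(\alpha)}$ and in $u$.
\item This leaves the single equation $G=uw-\prod_\alpha P_{\len_\alpha}-\mu$ in the variables $(\z',u,w)$, where $\z'$ omits the last variable of each ray.
\item Write $\prod_\alpha P_{\len_\alpha}+\mu=\sum_\alpha\epsilon_\alpha Q_{\len_\alpha}$ with units $\epsilon_\alpha$, exactly as in the proof of Prop.~\ref{Prop:ProdContSing}.
\item Each $Q_{\len_\alpha}$ is an $A_1$-form in $z_1^{(\alpha)},\dots,z_{\len_\alpha}^{(\alpha)}$ by Lemma~\ref{Lem:ResultsContinuants}\eqref{Lem:ResultsContinuants_singu}, and $uw$ is a further hyperbolic pair.
\end{enumerate}
Hence $G$ is a nondegenerate quadratic form plus higher-order terms. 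A Morse-lemma (splitting-lemma) argument then gives that $X$ is locally a hypersurface of type $A_1$ at $D$.

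\textbf{Part \eqref{it:c_i}.} Fix $i$ and a point $p\in C_{\alpha_i}$. On $C_{\alpha_i}$ we have $u=\xi_u$, which is invertible.
\begin{enumerate}[(i)]
\item Solve $G=0$ for $w=u^{-1}\bigl(\prod_\alpha P_{\len_\alpha}+\mu\bigr)$, which removes $G$ and $w$.
\item Use one equation $F_{\alpha_0}$ with $\alpha_0\in I_{\alpha_i}$ to eliminate $u=\lambda_{\alpha_0}^{-1}P_{\len_{\alpha_0}+1}(\z^{(\alpha_0)})$, as in Remark~\ref{Rk:eq_trivial1}.
\item For $\gamma\notin I_{\alpha_i}$: by the disjointness remark after Thm.~\ref{Thm:Sing}, $V(P_{\len_\gamma+1}-\lambda_\gamma u)$ is regular at $p$. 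So some $z^{(\gamma)}$-derivative is a unit, and we eliminate one $z^{(\gamma)}$-variable. These factors contribute only the cylinder part.
\item For the remaining $\beta\in I_{\alpha_i}$, set $h_\beta:=(-1)^{\halflen_\beta}\lambda_\beta^{-1}P_{\len_\beta+1}(\z^{(\beta)})$. Compatibility, i.e.\ $(-1)^{\halflen_\alpha}\lambda_\alpha^{-1}=(-1)^{\halflen_\beta}\lambda_\beta^{-1}$, turns the equations into $h_\beta=h_{\beta'}$ for all $\beta,\beta'\in I_{\alpha_i}$.
\end{enumerate}
The local picture is therefore $V(h_\beta-h_{\beta'}\mid\beta,\beta'\in I_{\alpha_i})$ times an affine space.

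\textbf{Normal forms of the $h_\beta$.} We need each $h_\beta-(\text{value at }p)$ to be a quadratic $A_1$-form in the coordinates $\z^{(\beta)}$, centered at $\z^{(\beta)}=0$ when $p$ lies on $C_{\alpha,\beta}$. Since $\len_\beta+1$ is even, Lemma~\ref{Lem:ResultsContinuants}\eqref{Lem:ResultsContinuants_singu} gives such a normal form $h_\beta-\xi=g_\beta(\t^{(\beta)})$ near $\z^{(\beta)}=0$, after an analytic change of coordinates involving only $\z^{(\beta)}$. Because each change of coordinates involves only one block of variables, the resulting forms are ``compatible'' in the sense of Lemma~\ref{Lem:intersec_of_An}, with $q_\beta=(\len_\beta+1)/2$. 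Subtracting the common constant $\xi$, we obtain exactly $\mathcal X^{(n)}$ with $n=|I_{\alpha_i}|$. Under this identification the loci where two blocks $\z^{(\alpha)},\z^{(\beta)}$ vanish match $\mathcal Y_{n-2}$, which is $C_{\alpha_i}$.

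\textbf{Main obstacle.} The subtle step is the behaviour at points $p$ where only some blocks $\z^{(\beta)}$ vanish. There the other $h_{\beta'}$ are regular and can be eliminated as well. So I would state the local isomorphism with the ambient intersection $\mathcal X^{(n)}$ along a neighbourhood of the whole $C_{\alpha_i}$, rather than only at its most singular point. In particular, the block-wise coordinate changes for the $h_\beta$ should be chosen in a way that does not depend on $p$. I would also check that, over non-perfect $\kappa$, these changes are defined over $\kappa$; this holds because the continuants have integer coefficients.
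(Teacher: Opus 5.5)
Your proposal follows the paper's proof essentially step for step. At $D$ you use that $P_{\ell_\alpha}$ is a unit to eliminate each $F_\alpha$ through $z^{(\alpha)}_{\ell_\alpha+1}$. You are then left with $G=uw-(\prod_\alpha P_{\ell_\alpha}+\mu)$, which is of type $A_1$ by the decomposition $\sum_\alpha\epsilon_\alpha Q_{\ell_\alpha}$ from Prop.~\ref{Prop:ProdContSing}. Along $C_{\alpha_i}$ you use that $u$ is a unit to discard $G$, and eliminate $u$ with one $F_\alpha$, $\alpha\in I_{\alpha_i}$. You then drop the regular equations for $\gamma\notin I_{\alpha_i}$ and bring each remaining block separately into the normal form of Lemma~\ref{Lem:ResultsContinuants}\eqref{Lem:ResultsContinuants_singu}. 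This is exactly what the paper does.

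\textbf{Sign error in step (iv) of Part~(2).} On $X$ the relations are $\lambda_\beta^{-1}P_{\ell_\beta+1}(\z^{(\beta)})=u$. So the correct normalisation is $h_\beta:=\lambda_\beta^{-1}P_{\ell_\beta+1}(\z^{(\beta)})$, without the factor $(-1)^{k_\beta}$.
\begin{itemize}
\item With your definition, compatibility makes $(-1)^{k_\beta}\lambda_\beta^{-1}$ independent of $\beta$. Hence $h_\beta=h_{\beta'}$ reads $P_{\ell_\beta+1}(\z^{(\beta)})=P_{\ell_{\beta'}+1}(\z^{(\beta')})$. This is not the defining relation when $k_\beta\not\equiv k_{\beta'}\bmod 2$ and $\mathrm{char}\,\kappa\neq 2$.
\item Concretely, take two rays with $\ell_1=1$, $\ell_2=3$, $\lambda_1=1$, $\lambda_2=-1$. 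They are compatible, and $X$ contains points with $\z^{(1)}=\z^{(2)}=0$, $u=-1$. Your equation evaluates to $-1=1$ there.
\item Your $h_\beta$ also take the value $\lambda_\beta^{-1}$ at $\z^{(\beta)}=0$, so the ``common constant $\xi$'' you subtract does not exist.
\end{itemize}
With $h_\beta=\lambda_\beta^{-1}P_{\ell_\beta+1}$, compatibility says precisely that all $h_\beta$ equal $\xi_u$ at $\z^{(\beta)}=0$. Moreover $h_\beta-\xi_u=\lambda_\beta^{-1}\bigl(P_{\ell_\beta+1}-(-1)^{k_\beta}\bigr)$, which after rescaling one variable per pair is the $A_1$-form of the Lemma. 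The rest of your argument then goes through.

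\textbf{Step (iii).} The disjointness remark after Thm.~\ref{Thm:Sing} only covers $\gamma$ lying in another class $I_{\alpha_j}$. For $\gamma\in I_0$, argue directly as the paper does: either $\ell_\gamma+1$ is odd, or incompatibility gives $\lambda_\gamma\xi_u\neq(-1)^{k_\gamma}$. In both cases $V(P_{\ell_\gamma+1}-\lambda_\gamma\xi_u)$ is regular.

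\textbf{Your ``main obstacle''.} The paper does not address this either; its proof only uses coordinates near $V(\z^{(\beta)}\mid\beta\in I_{\alpha_i})$. Your pointwise observation already handles it. Let $S\subseteq I_{\alpha_i}$ be the set of blocks vanishing at $p$. Blocks with $\z^{(\beta)}(p)\neq 0$ are regular and can be eliminated, leaving $\mathcal{X}^{(|S|)}$ times an affine space. This matches the model $\mathcal{X}^{(n)}$ at the corresponding points of $\mathcal{Y}_{n-2}$.

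Your further claim that the block coordinate changes can be chosen independently of $p$ is unjustified. Lemma~\ref{Lem:ResultsContinuants}\eqref{Lem:ResultsContinuants_singu} only provides them near $\z^{(\beta)}=0$, and the claim is not needed.
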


\begin{proof}
	Recall (cf.~Prop.~\ref{Prop:NewPresentation}) 
	that we work with the presentation of $ \XX $ in $ \AA_\kappa^{\len_1+\cdots+\len_m+ m + 2} $ given by 
	\[
	\begin{array}{ll}
		F_\alpha  = P_{\len_\alpha +1} ( z_1^{(\alpha)}, \ldots, z_{\len_\alpha + 1}^{(\alpha)} ) 
		- \lambda_\alpha u
		\ ,
		&
		\mbox{ for } \alpha \in \{ 1, \ldots, m \} 
		\ ,
		\\[3pt]
		
		G = \displaystyle 
		u w - 
		\prod_{\alpha=1}^m P_{\len_\alpha} (z_1^{(\alpha)}, \ldots, z_{\len_\alpha}^{(\alpha)}) - \mu
		\ .  
	\end{array} 
	\]
	 
	First, consider the origin $ D \in \AA_\kappa^{\len_1 + \cdots + \len_m + m + 2} $, 
	assuming that $ D \in \Sing (\XX) $. 
	Recall from the proof of Thm.~\ref{Thm:Sing} that
	\[
			D = 
		V(u,w, P_{\len_\alpha +1} ( z_1^{(\alpha)}, \ldots, z_{\len_\alpha + 1}^{(\alpha)} ) 
		\mid 1 \leq \alpha \leq m )
		\cap 
		\Sing (\prod_{\alpha=1}^m P_{\len_\alpha} ( z_1^{(\alpha)}, \ldots, z_{\len_\alpha}^{(\alpha)} ) + \mu)
	\]
	By Lemma~\ref{Lem:ResultsContinuants}\eqref{Lem:ResultsContinuants_consecutive}, $ P_{\len_\alpha +1} ( z_1^{(\alpha)}, \ldots, z_{\len_\alpha + 1}^{(\alpha)}) = 0 $ implies that 
	$ P_{\len_\alpha } ( z_1^{(\alpha)}, \ldots, z_{\len_\alpha}^{(\alpha)} ) $ is invertible.
	Hence, locally at $ D $, we may introduce the new variables 
	$ x_{\len_\alpha + 1}^{(\alpha)} :=  P_{\len_\alpha +1} ( z_1^{(\alpha)}, \ldots, z_{\len_\alpha + 1}^{(\alpha)} ) $ replacing $ z_{\len_\alpha + 1}^{(\alpha)} $,
	for all $ \alpha \in \{ 1, \ldots, m \} $. 
	On the other hand, 
	$ V ( \prod_{\alpha=1}^m P_{\len_\alpha} ( z_1^{(\alpha)}, \ldots, z_{\len_\alpha}^{(\alpha)} ) + \mu ) \subseteq  \AA_\kappa^{\len_1+\cdots + \len_m} $ has a singularity of type $ A_1 $ at the origin of $ \AA_\kappa^{\len_1+\cdots + \len_m} $ by Prop.~\ref{Prop:ProdContSing}.
	\\
	In conclusion, we may eliminate $ F_\alpha $ for all $ \alpha \in \{ 1, \ldots, m \} $ and the remaining $ G $ defines a hypersurface singularity of type $ A_1 $ in $ \AA_\kappa^{\len_1+\cdots+\len_m+2} $. 
	Hence, we have shown part \eqref{it:origin}.
	
	Next, let us look locally at $ C_i = C_{\alpha_i} = \bigcup_{\alpha,\beta \in I_{\alpha_i} \, : \,  \alpha \neq \beta } C_{\alpha, \beta} $. 
	Since $ C_i \cap C_j = \varnothing $ for $ i \neq j $, 
	there are no other components of $ \Sing(\XX) $ than $ C_i$ that are seen here.
	Recall from the proof of Thm.~\ref{Thm:Sing} that
	$ 
		C_{\alpha,\beta} = 
		\Sing (\lambda_\beta F_\alpha - \lambda_\alpha F_\beta )
		\cap V ( u w  - \mu)
		\cap 
		\XX 
	$. 
	Therefore, $ C_i \subset V (uw- \mu) $
	and locally at $ C_i $, 
	$ u $ and $ w $ are invertible. 
	This implies that we may introduce $ w' := u^{-1} G $ taking the role of $ w $.
	Thus we may neglect the equation $ G = 0 $ for the classification,
	while the other equations remain untouched. 	
	\\
	Set $ \alpha := \alpha_i $
	and substitute $ u = \lambda_\alpha^{-1} P_{\len_\alpha +1} ( \z^{(\alpha)}) $.
	Then $ X $ is locally at $ C_i $ given by 
	\[
		V (
		\lambda_\alpha P_{\len_\beta +1} ( \z^{(\beta)})  
		- \lambda_\beta P_{\len_\alpha +1} ( \z^{(\alpha)})
		\mid \beta \in \{ 1, \ldots, m \} \setminus \{ \alpha \}
		)	
		\ . 
	\]
	For $ \beta \notin I_\alpha $,
	we have either $ \len_\beta + 1 $ is odd, or 
	$ \len_\beta + 1  = 2 \halflen_\beta $ and $ \lambda_\beta (-1)^{\halflen_{\alpha}} - \lambda_{\alpha} (-1)^{\halflen_\beta} \neq 0 $.
	Hence  
	the hypersurface $ V (\lambda_\alpha P_{\len_\beta +1} ( \z^{(\beta)})  
	- \lambda_\beta P_{\len_\alpha +1} ( \z^{(\alpha)})) $ is regular
	and this element may be neglected for the classification of the singularity of $ \XX $ at $ C_i $. 	 
	\\
	On the other hand, if $ \beta \in I_\alpha $, 
	then the hypersurface 
	$ V (\lambda_\alpha P_{\len_\beta +1} ( \z^{(\beta)})  
	- \lambda_\beta P_{\len_\alpha +1} ( \z^{(\alpha)}) 
	) $ has a singularity of type $ A_1 $ at $ V (\z^{({\alpha})}, \z^{(\beta)}) $.
	By Lemma~\ref{Lem:ResultsContinuants}\eqref{Lem:ResultsContinuants_singu}, there exist local coordinates $ \t^{(\alpha)} $ 
	(substituting $ \z^{(\alpha)} $)
	such that 
	$
	P_{\len_\alpha +1} ( \z^{(\alpha)}) = \sum_{j=1}^{\halflen_\alpha} t_{2j-1}^{(\alpha)} t_{2j}^{(\alpha)} + (-1)^{\halflen_\alpha}
	$
	where $ \len_\alpha + 1 = 2 k_\alpha $.
	The analogous statement holds for $ P_{\len_\beta +1} ( \z^{(\beta)}) $. 
	In conclusion,
	we obtain that  
	$ V (
	\lambda_\alpha P_{\len_\beta +1} ( \z^{(\beta)})  
	- \lambda_\beta P_{\len_\alpha +1} ( \z^{(\alpha)})
	\mid \beta \in I_\alpha \setminus \{ \alpha \}
	) $
	defines an intersection of compatible $ A_1 $-hypersurface singularities
	centered $ V( \z^{(\beta)} \mid 
	\beta \in I_\alpha ) $ which provides assertion \eqref{it:c_i}.
\end{proof}

\section{Combinatorial Considerations}
\label{Sec:Combi}

In this section, we investigate the combinatorics of the singularities of a star-shaped cluster algebra. 
In \cite{BFMS23}, the number of irreducible components of the singular locus was determined for a star-shaped cluster algebra with $ m $ rays of length 1 and with trivial coefficients (Fig.~\ref{Fig:Ex_star_quiver_BFMS23}) as
\[  
	\binom{m}{2} 2^{m-2} = m(m-1) 2^{m-3} \ . 
\]
(Note that in \cite[Thm.~6.3]{BFMS23} the formula was given for $ m = n-1 $).

As we have seen in Thm.~\ref{Thm:ClassSing},
the appearing singularities are intersections of compatible $ A_1 $-hypersurfaces.
Therefore, we focus on combinatorics of this type of singularities.

\begin{Bem}
	The intersection of compatible $ A_1$-hypersurface singularities centered at $ V (\t^{(a)} \mid a \in \{ 1, \ldots, n \}) $ of Lemma~\ref{Lem:intersec_of_An} corresponds
	to a star-shaped cluster algebra 
	whose underlying quiver has $ n $ rays which are of lengths $ \len_a = 2 q_a - 1 $ 
	and for which the coefficients fulfill $ \lambda_\alpha = (-1)^{q_a} $
	for $ a \in \{ 1, \ldots, n \} $.
	In terms of Def.~\ref{Def:partition}\eqref{Def:partition-item} this means, $ m =n $, $ s = 1 $, $ |I_1| = 1 $, and $ I_0 = \emptyset $, see also Example~\ref{Ex:Comaptible}.
\end{Bem}

Recall the setting of Lemma~\ref{Lem:intersec_of_An}:
For $ n \in \ZZ_{\geq 2} $, 
\[
\begin{array}{l}  
	\displaystyle 
	\mathcal{X} \ = \  \mathcal{X}^{(n)} \ =  \ V ( g_a (\t^{(a)}) - g_b (\t^{(b)}) \mid a, b \in \{ 1, \ldots, n\} )
	\subset \AA_\kappa^{2(q_1+ \cdots + q_n)} = \mathcal{Z}
	\ ,  
	\\[5pt]
	\displaystyle 
	\mathcal{Y}_j 
	\ = \
	\mathcal{Y}_j^{(n)} 
	\ = 
	\bigcup_{(a_1, \ldots, a_{n-j})}
	\Big( V(\t^{(a_1)}, \ldots, \t^{(a_{n-j})}) \cap \mathcal{X} \Big)  
	\ , 
	\quad j \in \{ 0, \ldots, n - 2 \} 
	\ , 
\end{array} 
\]
where
$
g_a (\t^{(a)}) = \sum_{i=1}^{q_a} t_{2i-1}^{(a)} t_{2i}^{(a)} \in \kappa [\t^{(a)}] = \kappa [t_1^{(a)}, \ldots, t_{2q_a}^{(a)}] $
for $ a \in \{ 1, \ldots, n \} $
and the union ranges over all $ (n-j) $-tuples $ (a_1, \ldots, a_{n-j}) $
with $ a_1 < a_2 < \cdots < a_{n-j} $. 

The numerical data that we are interested in is the following:
 
\begin{defi}
	\label{Def:cA}
	Set $ \q := (q_1, \ldots, q_n) $. 
	For $ j \in \{ 0, \ldots, n - 2 \} $,
	we define
	\[  
		\mathcal{A} (n;\q; j) 
	\] 
	to be the number of irreducible components of $ \mathcal{Y}_j $
	and
	\[
		\mathcal{A} (n;\q) := \ \sum_{j=0}^{n-2} \mathcal{A} (n;\q; j) 
		\ . 
	\]
\end{defi}

Notice that $ \mathcal{Y}_{n-2} $ is the singular locus of $ \mathcal{X} $. 
Hence, the result of \cite{BFMS23} can be reformulated as $ \mathcal{A} (n;1, \ldots, 1; n-2) = n (n-1) 2^{n-3} $.

\begin{prop}
	\label{Prop:cA}
	Let the situation be as in Def.~\ref{Def:cA}.
	Set $ \alpha := \# \{ a \in \{ 1, \ldots, n \} \mid q_a = 1 \} $.
	For $ j \in \{ 0, \ldots, n - 2 \} $,
	we have 
	\[
		\mathcal{A} (n;\q; j) 
		\ = \ 
		\sum_{k=0}^{n-j} 
		\binom{n-\alpha}{n-j-k} \binom{\alpha}{k} 2^{\alpha - k}
	\]
	and
	\[
	\mathcal{A} (n;\q) 
	\ = \ 
	3^\alpha 2^{n-\alpha}  
	- ( n + 1) 2^{\alpha} 
	+ \alpha 2^{\alpha - 1}
	\ .
	\]
\end{prop}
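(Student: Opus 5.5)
The plan is to describe $\mathcal{Y}_j$ explicitly as a union of products of quadric cones, read off its irreducible components, and then sum. The only geometric input needed is the irreducibility of the cones $V(g_b)$.

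\emph{Step 1: describe $\mathcal{Y}_j$.} Fix an $(n-j)$-subset $S=\{a_1<\dots<a_{n-j}\}$ and let $T:=\{1,\ldots,n\}\setminus S$, so $|T|=j$. On $V(\t^{(a)}\mid a\in S)$ we have $g_a=0$ for $a\in S$. Since $n-j\geq 2\geq 1$, the relations $g_a-g_b$ then force $g_b=0$ for all $b\in T$. Hence
\[
V(\t^{(a)} \mid a\in S)\cap\mathcal{X} \;\cong\; \prod_{b\in T} V(g_b(\t^{(b)}))\subset \prod_{b\in T}\AA_\kappa^{2q_b},
\]
which is the description already noted at the start of the proof of Lemma~\ref{Lem:intersec_of_An}.

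\emph{Step 2: components of each piece.} For $q_b\geq 2$, $g_b$ is a nondegenerate quadratic form in $2q_b\geq 4$ variables. Such a form is irreducible over $\kappa$, since a product of two linear forms has rank at most $2$. So $V(g_b)$ is irreducible. For $q_b=1$, $V(t_1^{(b)}t_2^{(b)})$ has exactly two components, the two coordinate lines. A product of irreducible varieties given by coordinate-separated equations is irreducible, because each factor is cut out by a prime ideal in disjoint variables and is geometrically integral here. Therefore the piece indexed by $S$ has exactly $2^{|T\cap A|}$ irreducible components, where $A:=\{a\mid q_a=1\}$ and $|A|=\alpha$.

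\emph{Step 3: no containments between pieces.} Let $S\neq S'$ be of the same size, and pick $b\in S'\setminus S$, so $b\in T$. A component of the $S$-piece is not contained in $V(\t^{(b)})$: its $b$-factor is $V(g_b)$ or a coordinate line, and neither is the origin. On the other hand, every component of the $S'$-piece lies in $V(\t^{(b)})$. So no component of one piece is contained in a component of another. Distinct components within one piece are also not contained in each other. Hence
\[
\mathcal{A}(n;\q;j)=\sum_{|T|=j}2^{|T\cap A|}.
\]
Grouping by $k:=|S\cap A|$, so that $|T\cap A|=\alpha-k$ and $|S\setminus A|=n-j-k$, gives the stated $\sum_k\binom{n-\alpha}{n-j-k}\binom{\alpha}{k}2^{\alpha-k}$.

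\emph{Step 4: the total.} Summing over all subsets $T\subseteq\{1,\ldots,n\}$ gives
\[
\sum_{T}2^{|T\cap A|}=(1+2)^\alpha(1+1)^{n-\alpha}=3^\alpha 2^{n-\alpha}.
\]
From this we remove the terms with $j=n$ and $j=n-1$. The term $j=n$ ($T$ everything) contributes $2^\alpha$. For $j=n-1$, $T$ is the complement of a single index $c$: this gives $2^\alpha$ if $c\notin A$, which happens $n-\alpha$ times, and $2^{\alpha-1}$ if $c\in A$, which happens $\alpha$ times. The subtracted total is
\[
2^\alpha+(n-\alpha)2^\alpha+\alpha2^{\alpha-1}=(n+1)2^\alpha-\alpha2^{\alpha-1},
\]
which yields the formula for $\mathcal{A}(n;\q)$.

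The main point needing care is Step 3 together with the irreducibility claims in Step 2. In particular, the $q_b\geq 2$ cones must not split even over non-closed $\kappa$; nondegeneracy and rank $\geq 4$ ensure this. The remaining steps are bookkeeping.
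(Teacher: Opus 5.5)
Your proof is correct and follows the paper's route. Like the paper, you describe each piece of $\mathcal{Y}_j$ as a product of the cones $V(g_b)$ for $b\notin\{a_1,\ldots,a_{n-j}\}$, with every $q_b=1$ factor splitting into two lines, and group by $k$. For the total you sum over everything and remove the excluded boundary layers: your removal of $|T|\in\{n-1,n\}$ is exactly the paper's subtraction of the $k\in\{0,1\}$ terms. Your irreducibility argument for the cones with $q_b\geq 2$ and your non-containment check between pieces make explicit two points the paper leaves implicit.
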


\begin{proof}
	The description of $ \mathcal{Y}_j $ provides components of the form
	\[ 
		V(\t^{(a_1)}, \ldots, \t^{(a_{n-j})}) \cap \mathcal{X} 
		\ = \ 
		V(\t^{(a_1)}, \ldots, \t^{(a_{n-j})}, 
		g_b (\t^{(b)}) \mid 
		b \in \{ 1, \ldots, n\} \setminus \{ a_1, \ldots, a_{n-j} \}
		)
		\ . 
	\]
	These are not necessarily irreducible components. 
	If $ q_b = 1 $, 
	we have $ g_b (\t^{(b)}) = t_{1}^{(b)} t_{2}^{(b)} $. 
	Hence, for every such $ b $, we obtain two components.
	\\
	Fix $ k \in \{ 0, \ldots, n-j \} $.
	Assume that $ k $ many elements of $ a_1, \ldots, a_{n-j} $ fulfill $ q_a = 1 $.
	There are $ \binom{\alpha}{k} $ possibilities to choose these $ k $ elements,
	while we have $ \binom{n-\alpha}{n-j-k} $ choices for  the rest of $ a_1, \ldots, a_{n-j} $ 
	(for which necessarily $ q_a > 1 $ holds).
	This implies that the number of $ b \in \{ 1 , \ldots, n \}  \setminus \{ a_1, \ldots, a_{n-j} \} $
	with $ q_b = 1 $ is equal to $ \alpha - k $.
	Hence, $ 2^{\alpha-k} $ irreducible components are created through them.   
	In conclusion, for fixed $ k $, this consideration leads to the factor
	\[
		\binom{n-\alpha}{n-j-k} \binom{\alpha}{k} 2^{\alpha - k}
		\ . 
	\]
	By summing over all possible $ k $, we obtain the claimed formula for $ \mathcal{A} (n;\q; j)  $.

	Notice that $ \binom{\alpha}{k} = 0 $ if $ k > \alpha $ and $ \binom{n-\alpha}{n-j-k} = 0 $ for $ k > n- j $.
	Thus, the identities
	\[
	\mathcal{A} (n;\q; j) 
	\ = \ 
	\sum_{k=0}^{n} 
	\binom{n-\alpha}{n-j-k} \binom{\alpha}{k} 2^{\alpha - k}
	\ = \ 
	\sum_{k=0}^{\alpha} 
	\binom{n-\alpha}{n-j-k} \binom{\alpha}{k} 2^{\alpha - k}
	\]
	hold.
	
	By definition, $ \mathcal{A} (n;\q) $ is the sum over all $ \mathcal{A} (n;\q;j) $.
	By rearranging the terms in the sum, 
	we get
	\[
		\mathcal{A} (n;\q)
		\ = \ 
		\sum_{j=0}^{n-2}  
		\sum_{k=0}^{\alpha} 
		\binom{n-\alpha}{n-j-k} \binom{\alpha}{k} 2^{\alpha - k}
		\ = \ 
		\sum_{k=0}^{\alpha} 
		 \binom{\alpha}{k} 2^{\alpha - k}
		 \sum_{j=0}^{n-2}   \binom{n-\alpha}{n-j-k}
		 \ = \ 
	\]
	\[
		\ = \ 
		\sum_{k=0}^{\alpha} 
		\binom{\alpha}{k} 2^{\alpha - k}
		\sum_{e = 2-k}^{n-k}   \binom{n-\alpha}{e}
		\ \stackrel{k \leq \alpha}{=} \
		\sum_{k=0}^{\alpha} 
		\binom{\alpha}{k} 2^{\alpha - k}
		\sum_{e = 2-k}^{n-\alpha}   \binom{n-\alpha}{e} 
		\ . 
	\]
	We have 
	$ \displaystyle \sum_{e=0}^{n-\alpha}   \binom{n-\alpha}{e}  = 2^{n-\alpha} $
	and $ \displaystyle \sum_{k=0}^{\alpha} 
	\binom{\alpha}{k} 2^{- k} = (1 + 2^{-1})^\alpha = 3^\alpha 2^{-\alpha} $.
	Note that the sum in consideration starts with $ e = 2 - k $, so we have to subtract terms for $ k \in \{0,1\} $.
	Therefore, we deduce 
	\[
		\mathcal{A} (n;\q)
		\ = \ 
		\sum_{k=0}^{\alpha} 
		\binom{\alpha}{k} 2^{n - k} 
		- \alpha 2^{\alpha - 1} 
		- (n+1 - \alpha) 2^{\alpha}
		\ = \
		3^\alpha 2^{n-\alpha}  
		- ( n + 1) 2^{\alpha} 
		+ \alpha 2^{\alpha - 1}
		\ ,
	\]
	as claimed.
\end{proof}

Fig.~\ref{Fig:Table}
shows the values of 
$  \mathcal{A} (n;\q) 
= 
3^\alpha 2^{n-\alpha} - ( n + 1) 2^{\alpha} + \alpha 2^{\alpha - 1} $
for small $ (\alpha, n) $.
Recall that $ n \geq 2 $ and $ 0 \leq \alpha \leq n $.

 \begin{figure}
\[
\begin{array}{|c||c|c|c|c|c|c|c|c|c|c|c|}
	\hline 
	\textcolor{white}{\Big|}
	n \backslash \alpha
	& 0 & 1 & 2 & 3 & 4 & 5 & 6 & 7 & 8 & 9 
	\\
	\hline \hline 
	\textcolor{white}{\Big|}
	2
	& 1 & 1 & 1 & - & - & - & - & - & - & - 
	\\
	\hline 
	\textcolor{white}{\Big|}
	3
	& 4 & 5 & 6 & 7 &  - & - & - & - & - & - 
	\\
	\hline 
	\textcolor{white}{\Big|}
	4
	& 11 & 15 & 20 & 26 & 33 &  - & - & - & - & - 
	\\
	\hline 
	\textcolor{white}{\Big|}
	5
	& 26 & 37 & 52 & 72 & 98 &  131 & - & - & - & - 
	\\
	\hline 
	\textcolor{white}{\Big|}
	6
	& 57 & 83 & 120 & 172 & 244 & 342 & 473 & - & - & - 
	\\
	\hline 
	\textcolor{white}{\Big|}
	7
	& 120 & 177 & 260 & 380 & 552 & 796 & 1.138 & 1.611 & -& -
	\\
	\hline 
	\textcolor{white}{\Big|}
	8
	& 247 & 367 & 544 & 804 & 1.184 & 1.736 & 2.532 & 3.670 & 5.281 & - 
	\\
	\hline 
	\textcolor{white}{\Big|}
	9
	& 502 & 749 & 1.116 & 1.660 & 2.464 & 3.648 & 5.384 & 7.916 & 11.586 & 16.867 
	\\
	\hline 
	\end{array}
\]
\caption{Values of 
	$  \mathcal{A} (n;\q) 
	= 
	3^\alpha 2^{n-\alpha} - ( n + 1) 2^{\alpha} + \alpha 2^{\alpha - 1} $ 
	where the value $ n $ is fixed in each row
	and the value of $ \alpha $ is fixed in each column. \label{Fig:Table}} 	
\end{figure}

Let us have a look at boundary cases in terms of the value of $ \alpha \in \{ 0, \ldots, n \} $.

\begin{cor}
	\phantomsection 
	\label{cor:bondary}
	\begin{enumerate}[(1)]
		\item 
		For $ \alpha = 0 $,
		we get:
		\[  
		\mathcal{A} (n;\q; j) = \binom{n}{j}
		\quad 
		\mbox{ and }
		\quad   
		\mathcal{A} (n;\q) = 2^n - ( n + 1 ) \ .
		\] 
		
		\item 
		If $ \alpha = n $,
		then $ \q = (1,\ldots, 1) $ and
		\[  
		\mathcal{A} (n;1,\ldots, 1; j) = 		 \binom{n}{j} 2^{j}
		\quad 
		\mbox{ and }
		\quad   
		\mathcal{A} (n;1,\ldots, 1) = 
		3^n  - 2^{n}
		- n 2^{n - 1} 
		\ .
		\]  
	\end{enumerate}
\end{cor}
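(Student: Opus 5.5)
The plan is to specialize the two formulas of Prop.~\ref{Prop:cA} to $\alpha=0$ and $\alpha=n$ and simplify the resulting binomial sums.

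For $\alpha = 0$, the sum defining $\mathcal{A}(n;\q;j)$ reduces to the single term $k=0$, because $\binom{0}{k}=0$ for $k>0$. This leaves $\binom{n}{n-j}2^0 = \binom{n}{j}$. The closed formula gives $3^0 2^n - (n+1)2^0 + 0 = 2^n-(n+1)$. As a check, this agrees with $\sum_{j=0}^{n-2}\binom{n}{j} = 2^n - \binom{n}{n-1} - \binom{n}{n}$.

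For $\alpha = n$, every $q_a$ equals $1$ by the definition of $\alpha$, so $\q=(1,\ldots,1)$. Now $\binom{n-\alpha}{n-j-k} = \binom{0}{n-j-k}$, which is nonzero only for $k = n-j$. The sum therefore collapses to $\binom{n}{n-j} 2^{n-(n-j)} = \binom{n}{j}2^j$. The closed formula gives $3^n 2^0 - (n+1)2^n + n2^{n-1} = 3^n - 2^n - n2^n + n 2^{n-1} = 3^n - 2^n - n2^{n-1}$.

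As a sanity check, this equals $\sum_{j=0}^{n-2}\binom{n}{j}2^j = 3^n - n2^{n-1} - 2^n$ by the binomial theorem, with the terms $j=n-1$ and $j=n$ removed. The main (mild) obstacle is only correctly handling the vanishing binomial coefficients at the boundary; no geometric input beyond Prop.~\ref{Prop:cA} is needed.
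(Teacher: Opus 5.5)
Your proposal is correct and matches the paper, which gives no separate proof because the corollary is exactly the specialization of Prop.~\ref{Prop:cA} to $\alpha=0$ and $\alpha=n$. In each case the vanishing binomial coefficient ($\binom{0}{k}$, resp.\ $\binom{0}{n-j-k}$) collapses the sum to a single term, as you observe. Your binomial-theorem checks, summing $\binom{n}{j}$ resp.\ $\binom{n}{j}2^j$ over $0\le j\le n-2$, are an independent confirmation of the closed formulas but are not needed.
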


We have $ 2^n - ( n + 1 ) =  A(n,2) $, where 
	\[  
		A(n,k) = \sum_{i=0}^k (-1)^i \binom{n+1}{i} (k-i)^n 
	\] 
are the Eulerian numbers,
e.g., see \cite[Thm.~C, p.~243]{Comtet}
(see also \cite[A000295]{OEIS}).
\\
The integer sequence 
$ ( 3^n  - (n + 2) 2^{n - 1} )_{n \geq 0 } =: (b_0, b_1, b_2, \ldots ) $  
is the binomial transform of $ ( 2^n - ( n + 1 ) )_{n\geq 0}  =: (a_0, a_1, a_2, \ldots ) $ 
(see \cite[A066810]{OEIS}),
i.e.,
\[  
b_m = \sum_{k=0}^m  \binom{m}{k} a_k 
\ .
\]

Observe that we have implicitly shown in the proof of Prop.~\ref{Prop:cA} that the irreducible components of $ {\mathcal Y}_j $ have codimension
\[
\sum_{a \in \{ a_1, \ldots, a_{n-j} \} } 2 q_{a} + j 
\ .
\]
In particular, the codimension may vary for a fixed $ j $ depending on the values of $ q_a $. 

\begin{Qu}
	Let $ c \in \ZZ_+ $.
	How many irreducible component of codimension $ c $ do exist in $ {\mathcal Y}_j $, resp.~in $ \bigcup_{j=0}^{n-2} {\mathcal Y}_j $? 
	For which values of $ c $ are these counts non-zero? 
\end{Qu}

In the special case where $ q_a = q $ for some fixed $ q \in \ZZ_+ $ and all $ a \in \{ 1, \ldots, n \} $, 
all irreducible components of $ {\mathcal Y}_j $ are of codimension 
\[  
2q(n-j) + j = 2 qn - (2q-1) j  \ . 
\]
Therefore, Prop.~\ref{Prop:cA} provides an answer in this particular situation. 
Furthermore, since we have $ j \in \{ 0, \ldots, n- 2\} $, there is also a clear constraint on the codimensions that may appear.

\section{Beyond Singularities of Star Cluster Algebras}
\label{Sec:beyond}

A natural questions that comes up is how the singularity theory and corresponding combinatorial aspects do look like for cluster algebras with more general underlying quivers. 
In order to motivate possible further research, we outline some examples illustrating first directions going beyond star cluster algebras.
To keep things simple, we will not specify the orientation of edges in the figures, but we point out that it may become important to take this detail into account when investigating the corresponding commutative algebras. 

\begin{example}
	\label{Ex.1}
	For star-shaped quivers the fact that there is only one central vertex connecting finitely many quivers of type $ A_\ell $ for varying $ \ell $, 
	allowed to reduce the number of equations defining the corresponding variety. 
	Fig.~\ref{Fig:beyond1} shows a first example, where this is not possible since there are two vertices (marked with a black filling)
	of valency $ 3 $. 
	
	\begin{figure}[h!] 
		\scalebox{0.75}{
			\begin{tikzpicture}[-,>=stealth',shorten >=1pt,auto,node distance=1.5cm, thick,main node/.style={circle,draw,white}]

				\node[main node] (0) { };
				\node[main node] (1) [below left of=0] { };
				\node[main node] (2) [above left of=0] { };
				\node[main node] (3) [right of = 0] { };
				\node[main node] (00) [right of = 3] { };
				\node[main node] (01) [above right of=00] { };
				\node[main node] (02) [below right of=00] { };
				
				\draw[fill=black] (0) circle (6pt);
				\draw (1) circle (6pt);
				\draw (2) circle (6pt);
				\draw (3) circle (6pt);
				\draw[fill=black] (00) circle (6pt);
				\draw (01) circle (6pt);
				\draw (02) circle (6pt);
				
				\path
				(1) edge (0)
				(2) edge (0)
				(3) edge (0)
				(3) edge (00)
				(01) edge (00)
				(02) edge (00);

			\end{tikzpicture}		
			%
			%
			%
			%
			%
			%
			%
	}
	\caption{Example of the underlying graph of a quiver with two vertices of valency $ 3 $.}
	\label{Fig:beyond1}
	\end{figure}
	
	In particular, new phenomena for the possibility to reduce the number of exchange relations may appear and have to be understood.
	A particular example of this phenomenon are quivers whose underlying graphs are phylogenetic trees \cite{phylo}.  
\end{example}

\begin{example}
	\label{Ex.2}
	Since the underlying graphs of star cluster algebras are trees, the corresponding quivers are in particular acyclic. 
	Hence, another possible direction is quivers 
	whose underlying graphs contain cycles.
	The latter could imply that the quiver contains oriented cycles in which case additional exchange relations have to be taken into account. 
	First examples are illustrated in Fig~\ref{Fig:beyond2}. 
	\begin{figure}[h!] 
		\scalebox{0.65}{
			\begin{tikzpicture}[-,>=stealth',shorten >=1pt,auto,node distance=1.5cm, thick,main node/.style={circle,draw,white}]

				\node[main node] (1) at (0:1.5cm)  { };
				\node[main node] (2) at (120:1.5cm) { };
				\node[main node] (3) at (240:1.5cm)  { };

				\draw (1) circle (6pt);
				\draw (2) circle (6pt);
				\draw (3) circle (6pt);
				
				\path
				(1) edge (2)
				(2) edge (3)
				(3) edge (1);

			\end{tikzpicture}
			\hspace{2cm}
			\begin{tikzpicture}[-,>=stealth',shorten >=1pt,auto,node distance=1.5cm, thick,main node/.style={circle,draw,white}]

				\node[main node] (1) at (0:2cm)  { };
				\node[main node] (2) at (45:2cm) { };
				\node[main node] (3) at (90:2cm)  { };
				\node[main node] (4) at (135:2cm)  { };
				\node[main node] (5) at (180:2cm)  { };
				\node[main node] (6) at (225:2cm)  { };
				\node[main node] (7) at (270:2cm)  { };
				\node[main node] (8) at (315:2cm)  { };

				\draw (1) circle (6pt);
				\draw (2) circle (6pt);
				\draw (3) circle (6pt);
				\draw (4) circle (6pt);
				\draw (5) circle (6pt);
				\draw (6) circle (6pt);
				\draw (7) circle (6pt);
				\draw (8) circle (6pt);
				
				\path
				(1) edge (2)
				(2) edge (3)
				(3) edge (4)
				(4) edge (5)
				(5) edge (6)
				(6) edge (7)
				(7) edge (8)
				(8) edge (1);

			\end{tikzpicture}
		}
		\caption{First examples of underlying graphs of quivers that include cycles.}
		\label{Fig:beyond2}
	\end{figure}
\end{example}

\begin{example}
	\label{Ex.3}
	Once there is an understanding of the singularities arising from quivers as described in Examples~\ref{Ex.1} and~\ref{Ex.2}
	they can be combined to get examples of quivers with more involved underlying graphs. 
	In Fig.~\ref{Fig:beyond3}, we provide an example.
	
	\begin{figure}[h!] 
		\scalebox{0.8}{
			\begin{tikzpicture}[-,>=stealth',shorten >=1pt,auto,node distance=1.5cm, thick,main node/.style={circle,draw,white}]
				
				\node[main node] (01) { };
				\node[main node] (1) [below left of=01] { };
				\node[main node] (2) [above left of=01] { };
				\node[main node] (3) [above of=01] { };
				\node[main node] (4) [above right of = 01] { };
				\node[main node] (5) [right of = 4] { };
				\node[main node] (02) [below right of = 5] { };
				\node[main node] (03) [below right of = 02] { };
				\node[main node] (6) [above right of = 02] { };
				\node[main node] (7) [below right of = 03] { };
				\node[main node] (8) [right of = 03] { };
				\node[main node] (9) [right of = 8] { };
				\node[main node] (10) [below left of = 03] { };
				\node[main node] (11) [below left of = 02] { };
				
				\draw (01) circle (6pt);
				\draw (02) circle (6pt);
				\draw (03) circle (6pt);
				\draw (1) circle (6pt);
				\draw (2) circle (6pt);
				\draw (3) circle (6pt);
				\draw (4) circle (6pt);
				\draw (5) circle (6pt);
				\draw (6) circle (6pt);
				\draw (7) circle (6pt);
				\draw (8) circle (6pt);
				\draw (9) circle (6pt);
				\draw (10) circle (6pt);
				\draw (11) circle (6pt);
				
				\path
				(1) edge (01)
				(2) edge (01)
				(3) edge (01)
				(4) edge (01)
				(5) edge (4)
				(5) edge (02)
				(6) edge (02)
				(03) edge (02)
				(7) edge (03)
				(8) edge (03)
				(9) edge (8)
				(10) edge (03)
				(02) edge (01)
				(02) edge (11);

			\end{tikzpicture}
		}
		\caption{Example of an underlying graph of a quiver with cycles and multiple vertices of valency $ \geq 3 $.}
		\label{Fig:beyond3}
	\end{figure} 
\end{example}

\begin{example}
	\label{Ex.4}
	In the present article, we only considered quivers whose underlying graph was simply-laced,
	i.e., there was at most one edge between two vertices.
	Hence, it is natural to ask how things change if multiple edges are admitted? 
	This already appeared in the classification of the singularities of finite cluster type in \cite{BFMS23,BFMS25}.  
	\\
	Star cluster algebras allowed to provide a unified picture for the singularities corresponding to finite cluster type $ A, D, E $.
	Is it possible to generalize this presentation of the singularities  to include all finite cluster types, i.e., also those with multiple edges between two vertices? 
	\\
	Of course, with the possibility of multiple edges there are numerous examples by hand which can become arbitrarily complicated. 
	In particular, in combination with vertices of high valency and cycles within the underlying graph. 
	In Fig.~\ref{Fig:beyond4} we show two examples.
		
	\begin{figure}[h!] 
		\scalebox{0.8}{
			\begin{tikzpicture}[-,>=stealth',shorten >=1pt,auto,node distance=1.5cm, thick,main node/.style={circle,draw,white}]

				\node[main node] (0) { };
				\node[main node] (2) [below of=0] { };
				\node[main node] (3) [above left of=0] { };
				\node[main node] (4) [above right of = 0] { };
				\node[main node] (5) [above right of = 4] { };
				\node[main node] (6) [right of = 0] { };
				\node[main node] (7) [right of = 6] { };
				\node[main node] (8) [right of = 7] { };	
				\node[main node] (00) [above right of = 8] { };	
				\node[main node] (01) [right of = 00] { };	
				\node[main node] (02) [above of = 00] { };

				\draw (0) circle (6pt);
				\draw (00) circle (6pt);
				\draw (2) circle (6pt);
				\draw (3) circle (6pt);
				\draw (4) circle (6pt);
				\draw (5) circle (6pt);
				\draw (6) circle (6pt);
				\draw (7) circle (6pt);
				\draw (8) circle (6pt);
				\draw (01) circle (6pt);
				\draw (02) circle (6pt);
				
				\path
				(2) edge (0)
				(3) edge (0)
				(4) edge (0)
				(5) edge (4)
				(7) edge (6)
				(0) edge (6)
				(8) edge (00)
				(5) edge (00)
				(01) edge (00)
				(02) edge (00);
				
				\draw[transform canvas={yshift=-1.5pt}] (7) -- (8);
				\draw[transform canvas={yshift=1.5pt}] (7) -- (8);
				
				\draw[transform canvas={yshift=-2.5pt}] (0) -- (6);
				\draw[transform canvas={yshift=2.5pt}] (0) -- (6);

			\end{tikzpicture}
			\hspace{2cm}
			\begin{tikzpicture}[-,>=stealth',shorten >=1pt,auto,node distance=1.5cm, thick,main node/.style={circle,draw,white}]

				\node[main node] (0) { };
				\node[main node] (1) [below of=0] { };
				\node[main node] (2) [right of=1] { };
				\node[main node] (3) [above of = 2] { };
				
				\draw (2) circle (6pt);
				\draw (3) circle (6pt);
				\draw (0) circle (6pt);
				\draw (1) circle (6pt);
				
				\path
				(0) edge (1)
				(1) edge (2)
				(2) edge (3);
				
				\draw[transform canvas={yshift=-1.5pt}] (3) -- (0);
				\draw[transform canvas={yshift=1.5pt}] (3) -- (0);
				
				\draw[transform canvas={yshift=-2.5pt}] (1) -- (2);
				\draw[transform canvas={yshift=2.5pt}] (1) -- (2);

			\end{tikzpicture}
		}
		\caption{Examples of underlying graphs with multiple edges between two vertices.}
		\label{Fig:beyond4}
	\end{figure}
	
\end{example}

Finally, getting back to star cluster algebras: 
We have studied star cluster algebras from the viewpoint of singularity theory and deduced combinatorial aspects.
\\
Cluster algebras appear in numerous different areas of mathematics, e.g. in representation theory, mathematical physics, or symplectic geometry. 
Hence, we are pointed to:

\begin{Qu}
	Does the structure of star cluster algebras lead to new insights pointing towards more general results which are currently still hidden in other areas of mathematics? 
\end{Qu}


\newcommand{\etalchar}[1]{$^{#1}$}
\def\cprime{$'$}

\end{document}